\begin{document}

\title
{ equations  involving  fractional
Laplacian operator: Compactness  and application}
\date{}
\maketitle

\vspace{ -1\baselineskip}

{\small
\begin{center}
{\sc Shusen Yan}\\
Department of Mathematics,
The  University of New England\\
Armidale NSW 2351,
Australia\\
email:  syan@turing.une.edu.au\\[10pt]
{\sc Jianfu Yang}\\
Department of Mathematics,
Jiangxi Normal University\\
Nanchang, Jiangxi 330022,
P.~R.~China\\
email:  jfyang\_2000@yahoo.com\\[10pt]
and\\[10pt]
 {\sc Xiaohui Yu} \\
Institute for Advanced Study,
Shenzhen University\\
Shenzhen, Guangdong 518060,
P.~R.~China \\
email:  yuxiao\_211@163.com\\[10pt]

\end{center}
}

\renewcommand{\thefootnote}{}
\footnote{AMS Subject Classifications: 35J60,  35J65.} \footnote{Key
words: fractional Laplacian, critical elliptic problem, compactness, infinitely  many solutions.}

\begin{quote}
\footnotesize {\bf Abstract.}
In this paper, we consider the following problem involving  fractional
Laplacian operator:
\begin{equation}\label{eq:0.1}
(-\Delta)^{\alpha} u= |u|^{2^*_\alpha-2-\varepsilon}u + \lambda u\,\,  {\rm in}\,\, \Omega,\quad
u=0 \,\, {\rm on}\, \, \partial\Omega,
\end{equation}
where $\Omega$ is a smooth bounded domain in $\mathbb{R}^N$, $\varepsilon\in [0, 2^*_\alpha-2)$,
$0<\alpha<1,\, 2^*_\alpha = \frac {2N}{N-2\alpha}$. We show that for any sequence of solutions $u_n$
of \eqref{eq:0.1} corresponding to $\varepsilon_n\in [0, 2^*_\alpha-2)$, satisfying $\|u_n\|_{H}\le C$
in the Sobolev space $H$ defined in \eqref{eq:1.1a}, $u_n$ converges strongly in $H$  provided
that $N>6\alpha$  and $\lambda>0$. An application of this compactness result is that
problem \eqref{eq:0.1} possesses infinitely many solutions under the same assumptions.

\end{quote}

\newcommand{\N}{\mathbb{N}}
\newcommand{\R}{\mathbb{R}}
\newcommand{\Z}{\mathbb{Z}}

\newcommand{\cA}{{\mathcal A}}
\newcommand{\cB}{{\mathcal B}}
\newcommand{\cC}{{\mathcal C}}
\newcommand{\cD}{{\mathcal D}}
\newcommand{\cE}{{\mathcal E}}
\newcommand{\cF}{{\mathcal F}}
\newcommand{\cG}{{\mathcal G}}
\newcommand{\cH}{{\mathcal H}}
\newcommand{\cI}{{\mathcal I}}
\newcommand{\cJ}{{\mathcal J}}
\newcommand{\cK}{{\mathcal K}}
\newcommand{\cL}{{\mathcal L}}
\newcommand{\cM}{{\mathcal M}}
\newcommand{\cN}{{\mathcal N}}
\newcommand{\cO}{{\mathcal O}}
\newcommand{\cP}{{\mathcal P}}
\newcommand{\cQ}{{\mathcal Q}}
\newcommand{\cR}{{\mathcal R}}
\newcommand{\cS}{{\mathcal S}}
\newcommand{\cT}{{\mathcal T}}
\newcommand{\cU}{{\mathcal U}}
\newcommand{\cV}{{\mathcal V}}
\newcommand{\cW}{{\mathcal W}}
\newcommand{\cX}{{\mathcal X}}
\newcommand{\cY}{{\mathcal Y}}
\newcommand{\cZ}{{\mathcal Z}}

\newcommand{\abs}[1]{\lvert#1\rvert}
\newcommand{\xabs}[1]{\left\lvert#1\right\rvert}
\newcommand{\norm}[1]{\lVert#1\rVert}

\newcommand{\loc}{\mathrm{loc}}
\newcommand{\p}{\partial}
\newcommand{\h}{\hskip 5mm}
\newcommand{\ti}{\widetilde}
\newcommand{\D}{\Delta}
\newcommand{\e}{\epsilon}
\newcommand{\bs}{\backslash}
\newcommand{\ep}{\emptyset}
\newcommand{\su}{\subset}
\newcommand{\ds}{\displaystyle}
\newcommand{\ld}{\lambda}
\newcommand{\vp}{\varphi}
\newcommand{\wpp}{W_0^{1,\ p}(\Omega)}
\newcommand{\ino}{\int_\Omega}
\newcommand{\bo}{\overline{\Omega}}
\newcommand{\ccc}{\cC_0^1(\bo)}
\newcommand{\iii}{\opint_{D_1}D_i}

\theoremstyle{plain}
\newtheorem{Thm}{Theorem}[section]
\newtheorem{Lem}[Thm]{Lemma}
\newtheorem{Def}[Thm]{Definition}
\newtheorem{Cor}[Thm]{Corollary}
\newtheorem{Prop}[Thm]{Proposition}
\newtheorem{Rem}[Thm]{Remark}
\newtheorem{Ex}[Thm]{Example}

\numberwithin{equation}{section}
\newcommand{\meas}{\rm meas}
\newcommand{\ess}{\rm ess} \newcommand{\esssup}{\rm ess\,sup}
\newcommand{\essinf}{\rm ess\,inf} \newcommand{\spann}{\rm span}
\newcommand{\clos}{\rm clos} \newcommand{\opint}{\rm int}
\newcommand{\conv}{\rm conv} \newcommand{\dist}{\rm dist}
\newcommand{\id}{\rm id} \newcommand{\gen}{\rm gen}
\newcommand{\opdiv}{\rm div}

\vskip 0.2cm \arraycolsep1.5pt
\newtheorem{Lemma}{Lemma}[section]
\newtheorem{Theorem}{Theorem}[section]
\newtheorem{Definition}{Definition}[section]
\newtheorem{Proposition}{Proposition}[section]
\newtheorem{Remark}{Remark}[section]
\newtheorem{Corollary}{Corollary}[section]

\section {Introduction}

\setcounter{equation}{0}

In this paper, we consider the  following  problem with the
fractional Laplacian:
\begin{equation}\label{eq:1.1}
\left\{ \arraycolsep=1.5pt
\begin{array}{lll}
(-\Delta)^{\alpha} u= |u|^{2^*_\alpha-2-\varepsilon}u + \lambda u\,\, & {\rm in}\ \Omega,\\[2mm]
u=0, & {\rm on}\ \ \partial\Omega,
\end{array}
 \right.
\end{equation}
where $\Omega$ is a smooth bounded domain in $\mathbb{R}^N$, $\varepsilon\in [0, 2^*_\alpha-2)$
$\lambda>0$,  $0<\alpha<1$,  and $2^*_\alpha = \frac
{2N}{N-2\alpha}$ is the critical exponent in fractional
 Sobolev inequalities.

In a bounded domain $\Omega\subset \mathbb{R}^N$, we define the operator $(-\Delta)^{\alpha}$ as follows.
Let $\{\lambda_k,\varphi_k\}^\infty_{k=1}$ be the eigenvalues and corresponding eigenfunctions of the
Laplacian operator $-\Delta$ in $\Omega$ with zero Dirichlet boundary values on $\partial\Omega$ normalized by $\|\varphi_k\|_{L^2(\Omega)} = 1$, i.e.
\[
-\Delta \varphi_k = \lambda_k \varphi_k\quad{\rm in}\ \Omega;\quad \varphi_k = 0\quad{\rm on}\ \partial\Omega.
\]
For any $u\in L^2(\Omega)$, we may write $$u = \sum_{k=1}^\infty u_k\varphi_k, \quad{\rm where}\quad u_k = \int_\Omega u\varphi_k\,dx.$$
 We define the space
\begin{equation}\label{eq:1.1a}
H=\{u=\sum_{k=1}^\infty u_k\varphi_k\in L^2(\Omega): \sum_{k=1}^\infty \lambda_k^{\alpha}u_k^2<\infty\},
\end{equation}
which is equipped with the norm
\[
\|u\|_{H} =\bigg(\sum_{k=1}^\infty \lambda_k^{\alpha}u_k^2\bigg)^{\frac 12}.
\]
For any $u\in H$, the fractional Laplacian $(-\Delta)^{\alpha}$ is defined by
\[
(-\Delta)^{\alpha }u = \sum_{k=1}^\infty \lambda_k^{\alpha}u_k\varphi_k.
\]
With this definition, we see that
problem \eqref{eq:1.1} is the Br\'{e}zis-Nirenberg type problem
with the fractional Laplacian. In \cite{BN}, Br\'{e}zis and Nirenberg considered the existence of positive solutions for problem \eqref{eq:1.1} with $\alpha = 1$  and $\varepsilon=0$. Such a problem involves
 the critical Sobolev exponent $2^* = \frac {2N}{N-2}$ for $N\geq 3$, and it is well known that the Sobolev embedding $H^1_0(\Omega)\hookrightarrow L^{2^*}(\Omega)$ is not compact even if $\Omega$ is bounded. Hence, the associated functional of problem \eqref{eq:1.1} does not satisfy the Palais-Smale condition, and critical point theory cannot be applied
directly to find solutions of the problem. However, it is found in \cite{BN} that the functional satisfies the  $(PS)_c$ condition for $c\in (0, \frac 1N S^{\frac N2})$, where $S$ is the best Sobolev constant and $\frac 1N S^{\frac N2}$ is the least level at which the Palais-Smale condition fails. So a positive solution can be found if the mountain pass value corresponding to problem \eqref{eq:1.1} is strictly less than $\frac 1N S^{\frac N2}$. In \cite{L}, a concentration-compactness principle was developed to treat non-compact critical variational problems.
In the study of the existence of multiple solutions for critical problems, to retain the compactness, it is necessary to have a full description of energy levels at which the associated functional does not satisfy the Palais-Smale condition.
A global compactness result is found in \cite{ST},
 which describes precisely the obstacles
of the compactness for critical semilinear elliptic problems. This compactness result
shows that above certain energy level, it is impossible to prove  the Palais-Smale condition.
 For this reason, to obtain many solutions for the critical problem, it is
 essential to find a condition that can replace the standard Palais-Smale condition.

  In \cite{DS}, Devillanova and Solimini considered  \eqref{eq:1.1} with $\alpha=1$. They
started by considering any sequence of solutions $u_n$
of \eqref{eq:1.1} corresponding to $\varepsilon_n>0$, $\varepsilon_n\to 0$, satisfying $\|u_n\|_{H}\le C$
in the Sobolev space $H$ defined in \eqref{eq:1.1a}.
By analyzing the bubbling behaviors of $u_n$, they are able to show that $u_n$ converges strongly to a solution of the critical problem  in $H$ if $N>7$ and $\lambda>0$. A consequence of this compactness result is that \eqref{eq:1.1} with $\alpha=1$ is that \eqref{eq:1.1} with $\alpha=1$ and $\varepsilon=0$  has infinitely many solutions.
 So, we see that the compactness of the solutions set for  \eqref{eq:1.1} can be used to replace
 the Palais-Smale condition in the critical point theories.

Let us point out that the same  idea was
used  in \cite{CPY}, \cite{CY} and \cite{YY}
to study other problems involving critical exponents,
though the methods used in \cite{CPY,CY,YY}
to obtain the estimates are different from those
in \cite{DS}.

Problems with the fractional Laplacian  have been
extensively studied recently. See
for example \cite{BCPa, BCP, CabS, CT, CDDS, CSS, CS, JLX, S, SV, T, TX}. In particular, the
Br\'{e}zis-Nirenberg type problem was discussed in \cite{T} for the
special case $\alpha = \frac 12$,  and in \cite{BCP} for the general
case, $0<\alpha<1$, where existence of one positive solution was
proved.  To use the idea in \cite{BN} to prove the existence of one
positive solution for the fractional Laplacian, the authors in
\cite{BCP, T} used the following results in \cite{CS} (see also
\cite{BCPa}):  for any $u\in H$, the solution $v\in
H^1_{0,L}(\mathcal{C}_\Omega)$ of the problem
\begin{equation}\label{eq:1.2}
\left\{ \arraycolsep=1.5pt
\begin{array}{ll}
-{\rm div}(y^{1-2\alpha}\nabla v) = 0, & \text{in}\; \mathcal{C}_{\Omega}=\Omega\times(0,\infty),\\[1mm]
v=0,& \text{on}\;
\partial_L\mathcal{C}_{\Omega}=\partial\Omega\times(0,\infty),\\[1mm]
v = u , &\text{on}\;  \Omega\times\{0\},
 \end{array}
 \right.
 \end{equation}
satisfies  $$-\lim_{y\to 0^+}k_\alpha y^{1-2\alpha}\frac {\partial
v}{\partial y} = (-\Delta)^{\alpha}u,$$
 where  we use  $(x,y)= (x_1,\cdots,x_N,y)\in \mathbb{R}^{N+1}$, and

\begin{equation}\label{1-18-5}
H^1_{0,L}(\mathcal{C}_\Omega) = \{v\in L^2(\mathcal{C}_\Omega): v =
0 \,\,{\rm on}\,\, \partial_L\mathcal{C}_\Omega,\
\int_{\mathcal{C}_\Omega}y^{1-2\alpha}|\nabla v|^2\,dxdy<\infty\}.
\end{equation}
Therefore,  the nonlocal problem (\ref{eq:1.1}) can be reformulated
to the following  local problem:
\begin{equation}\label{eq:1.3}
\left\{ \arraycolsep=1.5pt
\begin{array}{lll}
-{\rm div}(y^{1-2\alpha}\nabla v) = 0,  & \text{in}\;\mathcal{C}_{\Omega},\\[1mm]
v=0,
&\text{on}\;\partial_L\mathcal{C}_{\Omega},\\[1mm]
 y^{1-2\alpha}\frac {\partial v}{\partial \nu} = |v(x,0)|^{2^*_\alpha-2-\varepsilon}v(x,0)+ \lambda v(x,0),  &\text{on}\;\Omega\times\{0\},
 \end{array}
 \right.
 \end{equation}
where $\frac {\partial }{\partial \nu}$ is the outward normal
derivative of $\partial \mathcal{C}_{\Omega}$. Hence, critical
points of the functional
\begin{equation}\label{eq:1.4}
I_\varepsilon(v) = \frac 12 \int_{\mathcal{C}_\Omega}y^{1-2\alpha}|\nabla v|^2\,dxdy -\frac 1{2^*_\alpha-\varepsilon}\int_{\Omega\times\{0\}} |v|^{2^*_\alpha-\varepsilon}\,dx -\frac \lambda 2\int_{\Omega\times\{0\}} |v|^{2}\,dx
\end{equation}
defined on $H^1_{0,L}(\mathcal{C}_\Omega)$ correspond to solutions
of (\ref{eq:1.3}). A solution at the mountain pass level of the
functional $I(u)$ was found in \cite{BCP, T}. On the other hand, it is
easy to  show by using the Pohozaev type identity that the problem
\[
(-\Delta)^{\alpha} u= |u|^{p-1}u \quad {\rm in}\quad \Omega,\quad
u=0\quad  {\rm on}\quad  \partial\Omega
\]
has no nontrivial solution if $p+1\geq \frac{2N}{N-2\alpha}$ and $\Omega$ is star-shaped.

In this paper, we  will investigate the existence of infinitely many
solutions for
 problem \eqref{eq:1.1} by finding critical points of
the functional $I(u)$. Since the problem is critical, the functional
$I(u)$ does not satisfy the Palais-Smale condition.  Thus the mini-max
theorems can not be applied directly to obtain infinitely many
solutions for \eqref{eq:1.1}.  So we follow the idea in \cite{DS} to
consider the subcritical problem
\begin{equation}\label{eq:1.5}
\left\{ \arraycolsep=1.5pt
\begin{array}{lll}
div(y^{1-2\alpha}\nabla v) = 0, & {\rm in}\ \mathcal{C}_\Omega,\\[2mm]
v = 0,   & {\rm on}\ \ \partial_L \mathcal{C}_\Omega,\\[2mm]
y^{1-2\alpha}\frac {\partial v}{\partial y} = - |v(x,0)|^{p_n-2}v(x,0) - \lambda v(x,0),   & {\rm on}\ \
\Omega\times \{0\}.
 \end{array}
 \right.
 \end{equation}
where $p_n = 2_\alpha^* - \varepsilon_n$ with $\varepsilon_n\to 0$.

The main result of this paper is the following.

\begin{Theorem}\label{thm:1.1}

Suppose $N> 6\alpha$,
 then for any $v_{n}$, which is a
solution of \eqref{eq:1.5} satisfying
$\|v_n\|_{H^1_{0,L}(\mathcal{C}_\Omega)}$ $\leq C$ for some constant
independent of $n$, $v_n$ converges strongly in
$H^1_{0,L}(\mathcal{C}_\Omega)$ as $n\to +\infty$.
\end{Theorem}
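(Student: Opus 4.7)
The plan is to mimic the Devillanova--Solimini argument \cite{DS} in the fractional setting by working on the Caffarelli--Silvestre extension \eqref{eq:1.5}, combining a bubble decomposition with refined integrability estimates and a local Pohozaev identity to rule out concentration of the $v_n$'s.

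First, from $\|v_n\|_{H^1_{0,L}(\mathcal{C}_\Omega)} \le C$ I would extract a subsequence with $v_n \rightharpoonup v_\infty$ in $H^1_{0,L}(\mathcal{C}_\Omega)$, where $v_\infty$ solves the critical extension problem corresponding to $\varepsilon=0$. A Struwe-type profile decomposition for the extension problem then yields a finite family of bubble profiles $W_n^i$ (rescaled Aubin--Talenti extensions) with scales $\mu_{i,n}\to\infty$ and centres $x_{i,n}\to x_i\in\overline{\Omega}$, plus a remainder $r_n\to 0$ strongly. Strong convergence of $v_n$ is equivalent to the absence of bubbles, and I would argue by contradiction, assuming at least one bubble persists. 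The goal is to rule this out when $N>6\alpha$ and $\lambda>0$.

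The heart of the argument is to establish, in a shrinking neighbourhood of each concentration point $x_{i,n}$, the pointwise bound
\[
|v_n(x,0)| \le C\,\mu_{i,n}^{(N-2\alpha)/2}\bigl(1+\mu_{i,n}^2|x-x_{i,n}|^2\bigr)^{-(N-2\alpha)/2},
\]
via Moser iteration on the degenerate elliptic equation $-\mathrm{div}(y^{1-2\alpha}\nabla v_n)=0$, combined with standard blow-up analysis of the rescaled profiles. From this pointwise control I would upgrade, via the Devillanova--Solimini test-function iteration (testing with $|v_n|^{2(q-1)}v_n$ cut off near $x_{i,n}$ and using weighted Sobolev and trace inequalities on the cylinder), to a uniform $L^q(\Omega)$ bound on $v_n(\cdot,0)$ for some exponent $q$ strictly larger than the trivial one coming from $2^*_\alpha$. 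The hypothesis $N>6\alpha$ enters precisely here: it is the threshold that allows the iteration exponent $q$ to be pushed past the value at which the $L^2$-mass of a bubble at scale $\mu$ on a fixed ball is $o(\mu^{-2\alpha})$ rather than merely $O(\mu^{-2\alpha})$.

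Finally, I would apply a local Pohozaev-type identity for the extension problem on $B(x_{i,n},\delta)\times(0,\delta)\subset\mathcal{C}_\Omega$, obtained by testing the equation against $(x-x_{i,n})\cdot\nabla v_n+\frac{N-2\alpha}{2}v_n$. The nonlinear term contributes a positive bubble quantum of energy, the boundary flux on the lateral sphere is $o(1)$ as $\mu_{i,n}\to\infty$ by the pointwise bound above, while the linear term reduces to $\lambda\int_{B(x_{i,n},\delta)}v_n^2\,dx$, which by the improved $L^q$ estimate is $o(\mu_{i,n}^{-2\alpha})$ exactly when $N>6\alpha$. Comparing the orders of the surviving terms produces a contradiction, so no bubble can persist and $v_n\to v_\infty$ strongly.

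The main obstacle is Step~3, the improved integrability bound beyond $L^{2^*_\alpha}$: the Moser--De~Giorgi / Devillanova--Solimini iteration must be carried out on the degenerate equation with weight $y^{1-2\alpha}$, with the nonlinearity living only on the trace $y=0$. This requires weighted Hardy-type and trace inequalities on the cylinder with constants tracked carefully enough to close the iteration at exactly the threshold $N=6\alpha$; the purely local Laplacian arguments of \cite{DS} do not apply verbatim and this is where the bulk of the technical work of the paper will lie.
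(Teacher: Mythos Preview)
Your overall architecture---bubble decomposition, refined integrability, local Pohozaev---matches the paper's, but the accounting in the final Pohozaev step is inverted, and this is a genuine gap. In the local identity on a half-ball $\mathcal{B}_n$ one gets schematically
\[
\Bigl(\frac{N}{p_n}-\frac{N-2\alpha}{2}\Bigr)\int_{\mathcal{B}_n\cap\{y=0\}}|v_n|^{p_n}\,dx
+\alpha\lambda\int_{\mathcal{B}_n\cap\{y=0\}}v_n^2\,dx
=\text{(boundary terms on }\partial\mathcal{B}_n).
\]
The coefficient of the nonlinear term is $O(\varepsilon_n)\to 0$, so it does \emph{not} supply a fixed positive quantum; it is dropped (being nonnegative). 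It is the linear term $\alpha\lambda\int v_n^2$ that furnishes the \emph{lower} bound: the bubble alone gives $\int_{\mathcal{B}_n\cap\{y=0\}}v_n^2\ge c\,\sigma_n^{-2\alpha}$ once $N>4\alpha$. Your claim that an improved $L^q$ estimate forces $\int v_n^2=o(\mu^{-2\alpha})$ cannot hold, since the bubble itself already carries $L^2$ mass $\sim\mu^{-2\alpha}$.

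The refined integrability is instead used to bound the \emph{boundary} terms from above, and this is done not at fixed radius $\delta$ but at the intermediate scale $r_n\sim\sigma_n^{-1/2}$ with $\sigma_n=\min_j\sigma_n^j$. One first proves $\|v_n(\cdot,0)\|_{q_1,q_2,\sigma_n}\le C$ in the Devillanova--Solimini mixed norm (no global pointwise bound is claimed; with several bubbles at different scales such a bound would be delicate), then shows that an annular ``safe region'' at scale $\sigma_n^{-1/2}$ contains no concentration point, and deduces that the surface integrals on $\partial\mathcal{B}_n$ are $\le C\sigma_n^{-(N-2\alpha)/2}$. A fixed-$\delta$ ball would not work: the weak limit $v_0$ alone contributes $O(1)$ to the boundary integrals there. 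The contradiction is then
\[
c\,\sigma_n^{-2\alpha}\le \alpha\lambda\int_{\mathcal{B}_n\cap\{y=0\}}v_n^2\,dx\le C\sigma_n^{-(N-2\alpha)/2},
\]
which is impossible exactly when $N>6\alpha$. Thus the dimensional threshold enters through comparing these two exponents, not through pushing an iteration exponent $q$ past a critical value.
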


Theorem~\ref{thm:1.1} is a special compactness result. It shows that
although $I(u)$ does not satisfy the Palais--Smale condition, for a
special Palais--smale sequence, which is  solutions of the perturbed
problem \eqref{eq:1.5}, it does converge strongly in
$H^1_{0,L}(\mathcal{C}_\Omega)$. It is well known now \cite{CDS,CPY} that this weak
compactness leads to the following existence result:

\begin{Theorem}\label{thm:1.2}
 If $N> 6\alpha$, then \eqref{eq:1.1} with $\varepsilon=0$ has infinitely many solutions.
\end{Theorem}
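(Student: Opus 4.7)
The plan is to deduce Theorem~\ref{thm:1.2} from the compactness statement of Theorem~\ref{thm:1.1} by a standard symmetric min--max argument applied to the subcritical approximating problems \eqref{eq:1.5}, following the strategy pioneered by Devillanova--Solimini in the local case $\alpha=1$. First I would fix a sequence $\varepsilon_n\downarrow 0$ and consider the functionals
\[
I_{\varepsilon_n}(v)=\frac12\int_{\mathcal{C}_\Omega}y^{1-2\alpha}|\nabla v|^2\,dxdy-\frac{1}{2^*_\alpha-\varepsilon_n}\int_{\Omega\times\{0\}}|v|^{2^*_\alpha-\varepsilon_n}\,dx-\frac{\lambda}{2}\int_{\Omega\times\{0\}}|v|^2\,dx
\]
on $H^1_{0,L}(\mathcal{C}_\Omega)$. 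Each $I_{\varepsilon_n}$ is even, of class $C^1$, subcritical (since $2^*_\alpha-\varepsilon_n<2^*_\alpha$), and satisfies the Palais--Smale condition by the compactness of the trace embedding into $L^{2^*_\alpha-\varepsilon_n}(\Omega)$.

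Next I would apply the $\mathbb{Z}_2$-equivariant min--max theory (Ljusternik--Schnirelman / symmetric mountain pass in the Ambrosetti--Rabinowitz form) to produce, for every integer $k\ge 1$, a critical value
\[
c_k(\varepsilon_n)=\inf_{A\in\Gamma_k}\sup_{v\in A}I_{\varepsilon_n}(v),
\]
where $\Gamma_k$ denotes the family of closed symmetric subsets of $H^1_{0,L}(\mathcal{C}_\Omega)$ of genus $\ge k$ that are contained in a suitable sublevel set. Using a fixed $k$-dimensional subspace spanned by eigenfunctions of the fractional Dirichlet Laplacian as a comparison set, one obtains an upper bound $c_k(\varepsilon_n)\le C_k$ independent of $n$, and by a standard lower estimate based on the linking with spheres in finite-codimensional subspaces one gets $c_k(\varepsilon_n)\ge \underline c_k$ with $\underline c_k\to+\infty$ as $k\to\infty$. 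Hence for each fixed $k$ there is a critical point $v_n^{(k)}$ of $I_{\varepsilon_n}$ with uniformly bounded energy, which together with the quadratic lower term $\lambda\int|v|^2$ and the Pohozaev/Ambrosetti--Rabinowitz relation coming from $\langle I_{\varepsilon_n}'(v_n^{(k)}),v_n^{(k)}\rangle=0$ yields a uniform bound $\|v_n^{(k)}\|_{H^1_{0,L}}\le C_k$.

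At this point Theorem~\ref{thm:1.1} applies directly: along a subsequence $v_n^{(k)}\to v^{(k)}$ strongly in $H^1_{0,L}(\mathcal{C}_\Omega)$, and the limit $v^{(k)}$ is a critical point of the critical functional $I_0$ corresponding to \eqref{eq:1.1} with $\varepsilon=0$, with energy $I_0(v^{(k)})=\lim_n c_k(\varepsilon_n)=:c_k\in[\underline c_k,C_k]$. Since $c_k\to+\infty$ as $k\to\infty$, the solutions $v^{(k)}$ cannot all be equal (or lie in any finite set), so infinitely many of them are distinct, which through the extension--trace correspondence yields infinitely many solutions of \eqref{eq:1.1}.

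The main obstacle is not the symmetric mountain pass machinery, which is standard, but the verification that the Ljusternik--Schnirelman levels $c_k(\varepsilon_n)$ of the subcritical functionals are bounded \emph{uniformly in $n$} from above (so that Theorem~\ref{thm:1.1} can be used) and at the same time bounded from below by a sequence tending to $+\infty$ (so that the solutions are genuinely distinct in the limit); both bounds require care because the nonlinear exponent moves with $n$, but they follow by testing on a fixed finite-dimensional space and by comparing with the eigenvalues of $(-\Delta)^{\alpha}$ on $\Omega$, uniformly in $n$.
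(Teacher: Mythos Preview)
Your framework coincides with the paper's up to the passage to the limit: subcritical approximation, $\mathbb{Z}_2$-equivariant min--max producing critical points of $I_{\varepsilon_n}$ at levels $c_k(\varepsilon_n)$, the uniform-in-$n$ upper bound $c_k(\varepsilon_n)\le C_k$, and the use of Theorem~\ref{thm:1.1} to obtain limit critical points $v^{(k)}$ of $I_0$ with $I_0(v^{(k)})=c_k:=\lim_n c_k(\varepsilon_n)$. The gap is your claim of a uniform-in-$n$ lower bound $c_k(\varepsilon_n)\ge\underline c_k\to+\infty$. The standard linking estimate on spheres in the orthogonal complement $H_{k-1}^\perp$ of the first $k-1$ eigenfunctions depends on the embedding constant of $H_{k-1}^\perp$ into $L^{2^*_\alpha-\varepsilon_n}(\Omega)$ tending to $0$ as $k\to\infty$; this relies on the compactness of the subcritical trace embedding and degenerates as $\varepsilon_n\to0$, since the best constant in the critical embedding $H\hookrightarrow L^{2^*_\alpha}$ does \emph{not} improve on finite-codimensional subspaces. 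Interpolating between $L^2$ and $L^{2^*_\alpha}$ only yields a constant of order $\lambda_k^{-c\,\varepsilon_n}$, so $\inf_n c_k(\varepsilon_n)$ stays bounded and your multiplicity conclusion does not follow.

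The paper (following \cite{CDS,CPY}) sidesteps this by a dichotomy on the non-decreasing sequence $(c_k)$: either infinitely many $c_k$ are distinct and one is done, or $c_k\equiv c$ for all large $k$. In the latter case, after disposing of the easy subcase in which $I_0$ has critical values arbitrarily close to but different from $c$, one combines the compactness furnished by Theorem~\ref{thm:1.1} with a deformation argument to show that the critical set $K_c=\{u:I_0'(u)=0,\ I_0(u)=c\}$ has genus $\ge2$, hence is infinite. To repair your proof, replace the unjustified lower-bound claim by this genus argument.
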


The main difficulty in the study of \eqref{eq:1.5} is that we need to
carry out the boundary estimates. This is different from
 the Dirichlet problems studied in \cite{CDS,CPY,CY,DS,YY}, which mainly involve the
interior estimates.

This paper is organized as follows.  In section~2,  we will state a decomposition
result for the solutions of the perturbed problem~\eqref{eq:1.5}.  In section~3,
we obtain some integral estimates which captures the possible bubbling behavior of the
  solutions of \eqref{eq:1.5}.  To prove such estimates, we need to study a linear
problem. This part is of independent interest. So we put it in Appendix~A.  Section~4
contains the estimates for solutions of  \eqref{eq:1.5} in the  region which does
not contain any blow up point, but
is close to some blow up point.  The main result is proved in section~5 by using
the local Pohozaev identity, together with the estimates in section~4.  In Appendix~B,
we prove a decay estimate for solutions of a problem in half space involving the
fractional critical Sobolev exponent.

Throughout this paper,  we use $\mathcal {B}_r(z)$ to denote the ball in $\mathbb{R}^{
N+1}$, centered at $z\in \mathbb{R}^{N+1}$ with radius $r$. We also use $X=(x,y)$ to
denote a point  in $\mathbb{R}^{N+1}$,  and for any set $D\in \mathbb{R}^{N}$,

\begin{equation}\label{1-1-9}
\mathcal{C}_{D}= D\times(0,\infty)\subset \mathbb{R}^{N+1},\quad
\partial_L \mathcal{C}_{D}=\partial D\times (0, +\infty).
\end{equation}

\section  {Preliminaries}

\setcounter{equation}{0}

Let $\Omega$ be a smooth bounded domain in $\mathbb{R}^N$ and $0<\alpha<1$. The space $H^\alpha(\Omega)$ is defined as the subset of $L^2(\Omega)$ such that for  $u\in L^2(\Omega)$, the norm
\[
\|u\|_{H^\alpha(\Omega)} = \|u\|_{L^2(\Omega)} + \biggl(\int_\Omega\int_\Omega\frac{|u(x) - u(\tilde x)|^2}{|x-\tilde x|^{N+2\alpha}}\,dxd\tilde x\biggr)^{\frac12}
\]
is finite.  Let $H^\alpha_0(\Omega)$ be the closure of $C^\infty_0(\Omega)$ with respect to the norm $\|\cdot\|_{H^\alpha(\Omega)}$.
It is known from \cite{LM} that for $0<\alpha\leq \frac 12$, $H^\alpha_0(\Omega) = H^\alpha(\Omega)$; for $\frac 12<\alpha< 1$, $H^\alpha_0(\Omega)\varsubsetneq H^\alpha(\Omega)$.

The space $H$ defined in \eqref{eq:1.1a} is the interpolation
space $(H^2_0(\Omega), L^2(\Omega))_{\alpha,2}$, see \cite{A, LM,Tar}. It was shown
in \cite{LM} that $(H^2_0(\Omega), L^2(\Omega))_{\alpha,2} = H^\alpha_0(\Omega)$ if $0<\alpha<1$ and $\alpha\not=\frac 12$; while $(H^2_0(\Omega), L^2(\Omega))_{\frac 12 ,2} = H_{00}^{\frac12}(\Omega)$, where
\[
H_{00}^{\frac12}(\Omega) =\{u\in H^{\frac 12}(\Omega):\int_\Omega\frac{u^2(x)}{d(x)}\,dx<\infty\},
\]
and $d(x) = dist(x,\partial\Omega)$ for all $x\in\Omega$. We know from \cite{BCP}, see also \cite{CDDS}, that for any $u\in H^\alpha_0(\Omega)$, let $v\in H^1_{0,L}(\mathcal{C}_\Omega)$ be the extension of $u$ defined in \eqref{eq:1.2}, then the mapping $u\to v$ is an isometry between $H^\alpha_0(\Omega)$ and $H^1_{0,L}(\mathcal{C}_\Omega)$. That is
\[
\|v\|_{H^1_{0,L}(\mathcal{C}_\Omega)} = \|u\|_{H^\alpha_0(\Omega)}\quad {\rm for}\quad u\in H^\alpha_0(\Omega).
\]

For any function  $W$ defined on $\mathbb{R}^{N+1}$,  $x\in \mathbb{R}^N$, $\sigma>0$, we define

\begin{equation}\label{2-1-9}
\rho_{x,\sigma}( W) = \sigma^{\frac{N-2\alpha}2}W \bigl(\sigma(\cdot- (x,0))\bigr).
\end{equation}

It
is now  standard to prove the following decomposition result.

\begin{Proposition}\label{prop:2.1}
Let $\{v_n\}\subset H^1_{0,L}(\mathcal{C}_\Omega)$ be a sequence of solutions of
\eqref{eq:1.5} satisfying
$\|v_n\|_{H^1_{0,L}(\mathcal{C}_\Omega)}\leq C$.  Then, there exist
a solution $v_0\in H^1_{0,L}(\mathcal{C}_\Omega)$ of (\ref{eq:1.3}),
a finite sequence $\{W^j\}_{j=1}^k\subset H^1_{0,L}(\mathbb{R}^N)$, which are
solutions of
\begin{equation}\label{eq:2.3}
\left\{ \arraycolsep=1.5pt
\begin{array}{lll}
div(y^{1-2\alpha}\nabla v) = 0, & {\rm in}\ \mathbb{R}^{N+1}_+,\\[2mm]
y^{1-2\alpha}\frac {\partial v}{\partial y} = - \beta_j
|v(x,0)|^{2^*_\alpha-2}v(x,0),   & {\rm in}\ \ \mathbb{R}^N,
 \end{array}
 \right.
 \end{equation}
where $\beta_j\in (0, 1]$ is some constant, and sequences
$\{x_n^j\}_{j=1}^k$, $\{\sigma_n^j\}_{j=1}^k$ satisfying
$\sigma_n^j>0$, $x_n^j\in \Omega$ and as $n\to +\infty$,

\begin{equation}\label{19-30-8}
\sigma_n^jdist(x_n^j,\partial\Omega)\to\infty,\,\,\frac{\sigma_n^j}{\sigma_n^i}+\frac{\sigma_n^i}{\sigma_n^j}
+\sigma_n^i\sigma_n^j |x_n^i-x_n^j|^2\to +\infty,\quad i\ne j,
 \end{equation}

\begin{equation}\label{20-30-8}
\|v_n - v_0 - \sum_{j=1}^k \rho_{x_n^j,\sigma_n^j}( W^j)\|_{H^1_{0,L}(\mathbb{R}^N)} \to 0.
 \end{equation}

\end{Proposition}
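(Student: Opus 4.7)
The plan is to adapt Struwe's global compactness / profile decomposition argument to the fractional extended problem. First, by the uniform bound $\|v_n\|_{H^1_{0,L}(\mathcal{C}_\Omega)}\le C$, extract a subsequence with $v_n\rightharpoonup v_0$ weakly in $H^1_{0,L}(\mathcal{C}_\Omega)$; the isometry with $H^\alpha_0(\Omega)$ recalled above, combined with the compactness of the trace embedding $H^\alpha_0(\Omega)\hookrightarrow L^q(\Omega)$ for $q<2^*_\alpha$, gives $v_n(\cdot,0)\to v_0(\cdot,0)$ in $L^q(\Omega)$ and almost everywhere. Since $p_n\to 2^*_\alpha$ and the $L^{p_n}$ norms of the traces are bounded, a Vitali-type convergence argument passes the nonlinear boundary term to the limit and shows that $v_0$ solves \eqref{eq:1.3}.

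The heart of the proof is the iterative extraction of bubbles. Put $w_n^{(0)} = v_n - v_0$, which still satisfies an equation of the form \eqref{eq:1.5} up to a right-hand side converging to zero. If $\|w_n^{(0)}\|\to 0$ we stop with $k=0$; otherwise, using the weighted concentration function
\[
Q_n(r) = \sup_{X_0 \in \overline{\mathcal{C}}_\Omega}\int_{\mathcal{B}_r(X_0)\cap \mathcal{C}_\Omega} y^{1-2\alpha}|\nabla w_n^{(0)}|^2\,dX,
\]
select a scale $\sigma_n^1>0$ and a point $x_n^1\in\Omega$ at which a fixed small but positive threshold is reached. Unfolding by the rescaling \eqref{2-1-9} produces $\tilde w_n$ on the dilated domain $\sigma_n^1(\Omega - x_n^1)\times(0,\infty)$; the linear $\lambda v$ term drops by a factor $(\sigma_n^1)^{-2\alpha}$ and disappears in the limit, while the critical boundary nonlinearity is scale invariant. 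Weighted elliptic estimates on compact subsets of $\mathbb{R}^{N+1}_+$ then extract a weak limit $W^1$ solving \eqref{eq:2.3}. The condition $\sigma_n^1\,\mathrm{dist}(x_n^1,\partial\Omega)\to\infty$ is forced by a dichotomy: if the product stayed bounded, $W^1$ would live on a half-space in the $x$-variables and vanish on a hyperplane, and a fractional Pohozaev/Liouville argument for the degenerate operator $-\mathrm{div}(y^{1-2\alpha}\nabla\cdot)$ would force $W^1\equiv 0$, contradicting the threshold choice.

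The last step is iteration: set $w_n^{(\ell)} = w_n^{(\ell-1)} - \rho_{x_n^{\ell},\sigma_n^{\ell}}(W^{\ell})$. A standard orthogonality computation, justified by the scale-separation \eqref{19-30-8} built into the construction, yields
\[
\|w_n^{(\ell)}\|_{H^1_{0,L}}^2 = \|v_n\|_{H^1_{0,L}}^2 - \|v_0\|_{H^1_{0,L}}^2 - \sum_{j=1}^{\ell}\|W^j\|^2 + o(1).
\]
Together with the universal positive lower bound on the energy of any nontrivial solution of \eqref{eq:2.3}, this forces termination after finitely many steps and produces \eqref{20-30-8}. The main obstacle I anticipate is ruling out lateral-boundary concentrations: the Liouville-type nonexistence for the degenerate extension operator on a wedge with mixed Dirichlet/Neumann data and a critical trace nonlinearity is not a formal consequence of the Euclidean Caffarelli-Silvestre theory, and requires a dedicated boundary regularity and decay analysis of the type the paper develops in its appendices.
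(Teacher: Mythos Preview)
The paper gives no proof of this proposition at all: immediately before the statement it simply says ``It is now standard to prove the following decomposition result,'' and moves on. So there is nothing to compare against; your Struwe-type extraction is exactly the kind of argument the authors are tacitly invoking.

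Two small corrections to your sketch are worth recording. First, the nonlinearity is \emph{not} scale invariant here, because $p_n=2^*_\alpha-\varepsilon_n$ is strictly subcritical: after the rescaling \eqref{2-1-9} the boundary term $|v|^{p_n-2}v$ picks up a factor $(\sigma_n^j)^{-\frac{N-2\alpha}{2}\varepsilon_n}$, whose subsequential limit is the constant $\beta_j\in(0,1]$ appearing in \eqref{eq:2.3}. You need to note this explicitly and argue that the limit cannot be $0$ (else $W^j$ would have zero Neumann trace and hence vanish, contradicting the concentration threshold). Second, your closing remark that the paper's appendices supply the half-space Liouville/nonexistence needed to exclude lateral-boundary bubbles is not accurate: Appendix~B only proves a decay estimate for entire solutions in $\mathbb{R}^{N+1}_+$, and Appendix~A is linear $L^p$ theory. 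The authors simply treat the boundary exclusion as part of the ``standard'' package, so if you want a self-contained proof you will have to import that Liouville statement from elsewhere.
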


\section {Integral Estimates }

\setcounter{equation}{0}

\bigskip

To prove Theorem~\ref{thm:1.1}, we need to prove  that the bubbles
$\rho_{x_n^j,\sigma_n^j} (W^j)$ do not appear in the decomposition
\eqref{20-30-8}.

Similar to \cite{DS}, we introduce the following norm. Let $q_1,
q_2\in (2, \infty)$ be such that $q_2< 2^*_\alpha< q_1,\, \beta> 0$
and $\sigma>0$. We consider the following inequalities
\begin{equation}\label{eq:3.2}
\left\{ \arraycolsep=1.5pt
\begin{array}{lll}
\|u_1\|_{q_1}\leq \beta, \\[2mm]
\|u_2\|_{q_2}\leq \beta \sigma^{\frac N {2^*_\alpha}-\frac N{q_2}}
 \end{array}
 \right.
 \end{equation}
and define the norm
\begin{equation}\label{eq:3.3}
\|u\|_{q_1,q_2,\sigma} = \inf\{\beta>0: {\rm there\, exist}\, u_1, u_2 \,{\rm such \,that} \,\eqref{eq:3.2}\, {\rm holds\, and}\, |u|\leq u_1 + u_2\}.
 \end{equation}

Denote

\[
\sigma_n =\min_{1\le j\le k} \sigma_n^j.
\]

 In this section, we will prove the following result.

\begin{Proposition}\label{prop:3.1}
Let $v_n$ be a solution of \eqref{eq:1.5}.  For any $q_1, q_2\in
(\frac{N}{N-2\alpha},$ $ +\infty)$, $q_2< 2_\alpha^*<q_1$, there is
a constant $C>0$, depending only on $q_1$ and $q_2$, such that

\begin{equation}\label{eq:3.16}
\|v_n\|_{q_1,q_2,\sigma_n}\leq C.
\end{equation}
\end{Proposition}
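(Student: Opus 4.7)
The plan is to bootstrap the a priori bound $\|v_n\|_{L^{2^*_\alpha}(\Omega)} \le C$, which follows immediately from $\|v_n\|_{H^1_{0,L}(\mathcal{C}_\Omega)} \le C$ via the trace embedding, up to the asymmetric mixed-norm bound \eqref{eq:3.16}. The starting observation is that $v_n(\cdot,0)$ satisfies the linear equation
\[
(-\Delta)^{\alpha} v_n = V_n\,v_n, \qquad V_n := |v_n(\cdot,0)|^{p_n-2} + \lambda,
\]
and that $V_n$ is uniformly bounded in $L^{N/(2\alpha)}(\Omega)$, since $(p_n-2)\cdot N/(2\alpha) \to 2^*_\alpha$ as $\varepsilon_n \to 0$ and $\|v_n\|_{L^{2^*_\alpha}}$ is under control.

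Using the bubble decomposition from Proposition~\ref{prop:2.1}, I would split $V_n = V_n^{(s)} + V_n^{(b)}$, where $V_n^{(s)}$ has small $L^{N/(2\alpha)}$ norm (to be absorbed by the linear operator) and $V_n^{(b)}$ is concentrated in balls of radius $O(1/\sigma_n^j)$ around the bubble centers $x_n^j$. The linear theory to be developed in Appendix~A should then give a bounded inverse for $(-\Delta)^{\alpha} - V_n^{(s)}$ between appropriate $L^p$ pairs with zero Dirichlet data, with operator norm independent of $n$. Applying this inverse to $((-\Delta)^{\alpha} - V_n^{(s)}) v_n = V_n^{(b)} v_n$ and iterating the exponent upward (a Brezis--Kato / Moser style argument adapted to the fractional extension) would produce a uniform bound of $v_n$ in $L^{q_1}$ for any prescribed $q_1 > 2^*_\alpha$; this is the $u_1$ piece of the mixed-norm decomposition \eqref{eq:3.2}.

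For the $L^{q_2}$ piece with $q_2 < 2^*_\alpha$, the scaling factor $\sigma_n^{N/2^*_\alpha - N/q_2}$ in \eqref{eq:3.2} is precisely the $L^{q_2}$ scaling of a single bubble $\rho_{x_n^j,\sigma_n^j}(W^j)$, as seen by direct change of variables. The strategy is therefore to subtract off the bubbles and bound the residual in $L^{q_2}$ with this scaling factor, using again the linear inversion above applied to the equation satisfied by $v_n - v_0 - \sum_j \rho_{x_n^j,\sigma_n^j}(W^j)$, together with the $H^1_{0,L}$-smallness of that residual given by \eqref{20-30-8}. Combining the $L^{q_1}$ bound on the bubble cores with the scaled $L^{q_2}$ bound on the residual and the limit $v_0$ gives the required mixed-norm estimate.

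I expect the hard part to be the uniformity of all constants as $\varepsilon_n \to 0$ and $\sigma_n \to \infty$ simultaneously: the splitting of $V_n$ must be done in a scale-invariant way so that the operator norm of $((-\Delta)^{\alpha} - V_n^{(s)})^{-1}$, and the number of iteration steps needed to reach $q_1$, do not depend on the bubble scales. This is exactly the reason for the unusual two-parameter norm $\|\cdot\|_{q_1,q_2,\sigma}$. A second genuine difficulty, flagged in the introduction, is that the linear inversion takes place on the half-cylinder $\mathcal{C}_\Omega$ with mixed Dirichlet/Neumann conditions and a degenerate weight $y^{1-2\alpha}$, so boundary estimates enter the linear theory of Appendix~A in an essential way and must replace the purely interior arguments used in \cite{DS}.
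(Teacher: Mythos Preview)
Your proposal has a genuine gap: the claim that the Brezis--Kato iteration yields a uniform bound of $v_n$ in $L^{q_1}$ for $q_1>2^*_\alpha$ is false. A single bubble already satisfies $\|\rho_{x_n,\sigma_n}(W)\|_{L^{q_1}}=\sigma_n^{N/2^*_\alpha-N/q_1}\|W\|_{L^{q_1}}\to\infty$, so no uniform $L^{q_1}$ bound on the full $v_n$ can hold. The iteration breaks down because your concentrated piece $V_n^{(b)}$, while bounded in $L^{N/(2\alpha)}$, has $\|V_n^{(b)}\|_{L^\infty}\sim\sigma_n^{2\alpha}$; you therefore cannot uniformly gain integrability on the right-hand side $V_n^{(b)}v_n$. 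Your assignment of pieces is also inconsistent: the bubbles carry exactly the $L^{q_2}$ scaling $\sigma_n^{N/2^*_\alpha-N/q_2}$ and must form the $u_2$ part, whereas placing $v_0$ in the $u_2$ slot fails since $\|v_0\|_{L^{q_2}}=O(1)$, not $O(\sigma_n^{N/2^*_\alpha-N/q_2})$.

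The paper avoids ever seeking a single-exponent bound on $v_n$. It dominates $|v_n|$ by the positive supersolution $w_n$ of \eqref{eq:3.3a} and decomposes $w_n$ \emph{through the equation}: with $a_0=C|v_0|^{4\alpha/(N-2\alpha)}$, $a_1=C|\sum_j\rho_{x_n^j,\sigma_n^j}(W^j)|^{4\alpha/(N-2\alpha)}$, $a_2=C|v_{n,2}|^{4\alpha/(N-2\alpha)}$ one has $w_n\le G(a_0|v_n|+A)+G(a_1|v_n|)+G(a_2|v_n|)$. The first term is uniformly in $L^{q_1}$ because $a_0\in L^\infty$; the second is in $L^{q_2}$ with the correct scaling, obtained from $\|a_1\|_{L^r}\le C\sigma_n^{N/2^*_\alpha-N/q_2}$ for $r\in(N/(4\alpha),N/(2\alpha))$ and the decay of $W^j$; the third is absorbed via Lemma~\ref{lem:3.1} since $\|a_2\|_{L^{N/(2\alpha)}}\to 0$. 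This produces the mixed-norm bound for one specific pair $(q_1,q_2)$; the extension to all admissible pairs is then a bootstrap (Lemma~\ref{lem:3.2}) performed directly in the norm $\|\cdot\|_{q_1,q_2,\sigma_n}$, never in a single Lebesgue space.
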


 To
prove Proposition~\ref{prop:3.1},  it is convenient to consider the
following problem. Let $D$ be a bounded domain such that
$\Omega\subset\subset D$ and let $v_n(x,0) = 0$ in
$D\setminus\Omega$.  We choose $A>0$ large enough so that
\[
\big||t|^{p_n-2}t+ \lambda t\big|\leq 2|t|^{2^*_\alpha-1} + A,\quad \forall\; t\in \mathbb{R}.
\]
Solving
\begin{equation}\label{eq:3.3a}
\left\{ \arraycolsep=1.5pt
\begin{array}{lll}
{\rm div}(y^{1-2\alpha}\nabla w) = 0,  &\text{in}\;\mathcal{C}_{D},\\[1mm]
w=0,
&\text{on}\;\partial_L\mathcal{C}_{D},\\[1mm]
 y^{1-2\alpha}\frac {\partial w}{\partial \nu} = 2|v_n(x,0)|^{2^*_\alpha-1} + A,  &
\text{on}\; D\times\{0\},
 \end{array}
 \right.
 \end{equation}
we obtain a sequence of solutions $\{w_n\}$ with $w_n\geq 0$.  By the choice of $D$ and $A$, we find
\begin{equation}\label{eq:3.3b}
\left\{ \arraycolsep=1.5pt
\begin{array}{lll}
{\rm div}(y^{1-2\alpha}\nabla (w_n\pm v_n)) = 0,  & \text{in}\;\mathcal{C}_{\Omega},\\[1mm]
w_n\pm v_n\ge 0,
&\text{on}\;\partial_L\mathcal{C}_{\Omega},\\[1mm]
 y^{1-2\alpha}\frac {\partial (w_n\pm v_n)}{\partial \nu} \geq 0,  &
\text{on}\;\Omega\times\{0\}.
 \end{array}
 \right.
 \end{equation}
Multiplying  \eqref{eq:3.3b} by $(w_n\pm v_n)^-$ and integrating by
part, we see that

\[
|v_n|\leq w_n,\quad \text{in}\;\mathcal{C}_{\Omega}.
\]
Hence, it is sufficient to estimate $w_n$ in $\mathcal{C}_D$.

\begin{Lemma}\label{lem:3.1}

Let $w\in H^1_{0,L}(\mathcal{C}_D)$ be a solution of
\begin{equation}\label{1-31-8}
\left\{ \arraycolsep=1.5pt
\begin{array}{lll}
div(y^{1-2\alpha}\nabla w)=0\;\;    &{\rm in}\quad \mathcal{ C}_D, \\[2mm]
 w=0  & {\rm on}\quad \partial_L  \mathcal{ C}_D , \\[2mm]
 y^{1-2\alpha}\frac{\partial w}{\partial \nu}=a(x)v  & {\rm on}\quad
D\times\{0\}, \\[2mm]
 \end{array}
 \right.
 \end{equation}
where $a\in L^{\frac N{2\alpha}}(D), v\in C^\beta(D)$  and $a, v\geq 0$.
 For any  $q_1, q_2\in (\frac{N}{N-2\alpha}, +\infty)$, $q_2< 2_\alpha^*<q_1$, there exists $C = C(N,q_1,q_2)>0$,
such that
\begin{equation}\label{eq:3.5}
\|w(\cdot,0)\|_{q_1,q_2,\sigma}\leq C\|a\|_{L^{\frac N{2\alpha}}(D)}\|v\|_{q_1,q_2,\sigma}.
\end{equation}
\end{Lemma}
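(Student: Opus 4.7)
My plan is to exploit linearity of \eqref{1-31-8} in $v$ together with a Hardy--Littlewood--Sobolev type regularity estimate for the degenerate extension problem. The key ingredient, which I would borrow from Appendix~A, is the following linear bound: if $W\in H^1_{0,L}(\mathcal{C}_D)$ solves
\begin{equation*}
\operatorname{div}(y^{1-2\alpha}\nabla W)=0 \text{ in } \mathcal{C}_D,\quad W=0 \text{ on } \partial_L\mathcal{C}_D,\quad y^{1-2\alpha}\tfrac{\partial W}{\partial\nu}=f \text{ on } D\times\{0\},
\end{equation*}
then $\|W(\cdot,0)\|_{L^q(D)}\le C\,\|f\|_{L^r(D)}$ whenever $1<r<N/(2\alpha)$ and $1/q=1/r-2\alpha/N$. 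This is the natural Neumann-data analogue of $(-\Delta)^{-\alpha}\colon L^r\to L^q$, and reflects the fact that the Dirichlet-to-Neumann map on $\mathcal{C}_D$ behaves, modulo lower-order contributions from the lateral boundary, like the fractional Laplacian $(-\Delta)^\alpha$ on $D$.

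Granted this linear estimate, the proof reduces to a decomposition argument. For any $\beta>\|v\|_{q_1,q_2,\sigma}$, by definition there exist non-negative $v_1,v_2$ with $v\le v_1+v_2$ satisfying \eqref{eq:3.2}. Let $w_i$ solve \eqref{1-31-8} with boundary datum $a v_i$ in place of $a v$. Since $a,v_i\ge 0$, the maximum principle for the degenerate operator gives $w_i\ge 0$, and applied to $w-w_1-w_2$ it yields $0\le w\le w_1+w_2$ on $\mathcal{C}_D$. For each $i$, H\"older's inequality gives
\begin{equation*}
\|a v_i\|_{L^{r_i}(D)}\le \|a\|_{L^{N/(2\alpha)}(D)}\,\|v_i\|_{L^{q_i}(D)},\qquad \frac{1}{r_i}=\frac{2\alpha}{N}+\frac{1}{q_i},
\end{equation*}
and the hypothesis $q_i\in(N/(N-2\alpha),\infty)$ is exactly what is needed to ensure $r_i\in(1,N/(2\alpha))$, so the linear estimate applies with exponents $(r_i,q_i)$ and yields
\begin{equation*}
\|w_i(\cdot,0)\|_{L^{q_i}(D)}\le C\,\|a\|_{L^{N/(2\alpha)}(D)}\,\|v_i\|_{L^{q_i}(D)}.
\end{equation*}
Inserting the bounds from \eqref{eq:3.2} for $v_1$ and $v_2$, the pair $(w_1(\cdot,0),w_2(\cdot,0))$ is an admissible decomposition of $w(\cdot,0)$ in \eqref{eq:3.3} with parameter $C\|a\|_{L^{N/(2\alpha)}(D)}\beta$, including the correct power $\sigma^{N/2^*_\alpha-N/q_2}$ in the second piece since both sides of the estimate scale identically. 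Letting $\beta\searrow\|v\|_{q_1,q_2,\sigma}$ gives \eqref{eq:3.5}.

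The main obstacle is clearly the linear regularity estimate imported from Appendix~A. On the half-space $\mathbb{R}^{N+1}_+$ it is immediate from the representation $W(\cdot,0)=c_\alpha I_{2\alpha}\ast f$ and the classical HLS inequality. On the truncated cylinder $\mathcal{C}_D$ with zero lateral data one must build a Green's function for $\operatorname{div}(y^{1-2\alpha}\nabla\cdot)$ with mixed Dirichlet--Neumann boundary conditions, verify that its trace on $D\times\{0\}$ has the same singular scaling as the Riesz kernel of order $2\alpha$ uniformly in $D$, and then apply HLS; the fact that $D$ has been chosen strictly larger than $\Omega$ provides the room to absorb boundary contributions coming from $\partial D$ without affecting the estimate near $\Omega$.
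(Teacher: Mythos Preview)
Your proof is correct and follows essentially the same approach as the paper: decompose $v$, solve the two auxiliary problems, use comparison to get $0\le w\le w_1+w_2$, and apply the linear estimate from Appendix~A (the paper invokes Corollary~\ref{lem:2.1}, which already packages your H\"older step together with the $L^r\to L^q$ bound of Proposition~\ref{lem:2.2}). Your closing speculation about how that Appendix~A estimate is proved is off---the paper obtains it by testing \eqref{eq:2.15} against $w^{2q-1}$ and using the trace inequality, not via a Green's function and HLS---but since you import the estimate rather than reprove it, this does not affect your argument for Lemma~\ref{lem:3.1}.
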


\begin{proof} For any $\varepsilon>0$ small and $\sigma>0$ fixed, let $v_1\geq 0$ and $v_2\geq 0$ be functions such that $|v|\leq v_1 + v_2$ and satisfying \eqref{eq:3.2} with $\beta = \|v\|_{q_1,q_2,\sigma} + \varepsilon$.  For $i= 1,2$,  consider
\begin{equation}\label{eq:3.6}
\left\{ \arraycolsep=1.5pt
\begin{array}{lll}
div(y^{1-2\alpha}\nabla w_i)= 0 \;\;   &{\rm in}\quad  \mathcal{C}_D, \\[2mm]
 w_i=0  & {\rm on}\quad \partial_L\mathcal{C}_D, \\[2mm]
 y^{1-2\alpha}\frac{\partial w_i}{\partial \nu}=a(x)v_i  & {\rm on}\quad
D\times\{0\}.\\[2mm]
 \end{array}
 \right.
 \end{equation}
By Corollary~\ref{lem:2.1},
\begin{equation}\label{eq:3.7}
\|w_i(\cdot,0)\|_{L^{q_i}(D)}\leq C\|a\|_{L^{\frac N{2\alpha}}(D)}\|v_i\|_{L^{q_i}(D)}, \,\, i = 1, 2.
\end{equation}

On the other hand, it follows from the comparison theorem   that

\[
0\le w\le w_1 +w_2,
\]
since $|v|\le v_1 +v_2$.
Thus we complete the proof.

\end{proof}

\begin{Lemma}\label{lem:3.2}

Let $w>0$ be the solution of
\begin{equation}\label{2-31-8}
\left\{ \arraycolsep=1.5pt
\begin{array}{lll}
{\rm div}(y^{1-2\alpha}\nabla w) = 0, \ & \text{in}\;\mathcal{C}_{D},\\[1mm]
w=0,\ \
&\text{on}\;\partial_L\mathcal{C}_{D},\\[1mm]
 y^{1-2\alpha}\frac {\partial w}{\partial \nu} = 2|v(x,0)|^{2^*_\alpha-1} + A, \ \ &
\text{on}\;D\times\{0\},
 \end{array}
 \right.
 \end{equation}
where $v\in C^\beta(D)$ is a nonnegative function.
Suppose $p_1,p_2\in (\frac {N+2\alpha}{N-2\alpha}, \frac N{2\alpha}\frac {N+2\alpha}{N-2\alpha})$ and
$p_2<2^*_\alpha<p_1$. Let $q_1,q_2$ be determined by
\begin{equation}\label{eq:3.11}
\frac 1{q_i}=\frac {N+2\alpha}{N-2\alpha}\frac 1{p_i}-\frac {2\alpha}N, \,\, i = 1, 2.
 \end{equation}
Then,  there exists a constant $C=C(N,p_1,p_2,\Omega)>0$ such that for
any $\sigma>0$, it holds
$$
\|w(\cdot,0)\|_{q_1,q_2,\sigma}\leq C\big(\|v\|_{p_1,p_2,\sigma}^{\frac
{N+2\alpha}{N-2\alpha}}+1\big).
$$
\end{Lemma}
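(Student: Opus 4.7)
The plan is to reduce the nonlinear estimate for $w$ to linear extension estimates applied to a decomposition of the source $v^{2^*_\alpha-1}$ induced by the definition of the norm $\|\cdot\|_{p_1,p_2,\sigma}$. First I would fix $\varepsilon>0$ and write $v\le v_1+v_2$ with $v_i\ge 0$, $\|v_1\|_{p_1}\le \beta$ and $\|v_2\|_{p_2}\le \beta\sigma^{N/2^*_\alpha-N/p_2}$, where $\beta=\|v\|_{p_1,p_2,\sigma}+\varepsilon$. Then, using the elementary convexity inequality $(a+b)^s\le C_s(a^s+b^s)$ for $s=2^*_\alpha-1>1$, the boundary source is bounded by
\[
2v^{2^*_\alpha-1}+A\le C\bigl(v_1^{2^*_\alpha-1}+v_2^{2^*_\alpha-1}\bigr)+A.
\]

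Next I would introduce, for $i=1,2$, the solutions $w_i$ of the linear extension problem on $\mathcal{C}_D$ with zero lateral data and Neumann-type datum $v_i^{2^*_\alpha-1}$, together with an analogous $w_3$ corresponding to the constant source $A$. Linearity of the equation and the comparison principle on $\mathcal{C}_D$ yield $0\le w\le C(w_1+w_2+w_3)$. Because $2^*_\alpha-1=\frac{N+2\alpha}{N-2\alpha}$, the space of the sources satisfies $\|v_i^{2^*_\alpha-1}\|_{L^{r_i}(D)}=\|v_i\|_{L^{p_i}(D)}^{(N+2\alpha)/(N-2\alpha)}$ with $r_i=p_i(N-2\alpha)/(N+2\alpha)$. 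The range hypothesis $p_i\in(\frac{N+2\alpha}{N-2\alpha},\frac{N}{2\alpha}\cdot\frac{N+2\alpha}{N-2\alpha})$ is exactly what forces $r_i\in(1,N/(2\alpha))$, which is the admissible range for the trace-type linear estimate from Appendix A (the same tool that supplies Corollary~\ref{lem:2.1} used in Lemma~\ref{lem:3.1}). Applying that estimate to each $w_i$ yields
\[
\|w_i(\cdot,0)\|_{L^{q_i}(D)}\le C\|v_i\|_{L^{p_i}(D)}^{(N+2\alpha)/(N-2\alpha)},
\]
with $1/q_i=1/r_i-2\alpha/N=\frac{N+2\alpha}{N-2\alpha}\cdot 1/p_i-2\alpha/N$, precisely the exponent prescribed in the statement. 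For $w_3$, boundedness of $D$ and of the constant source gives $\|w_3(\cdot,0)\|_{L^\infty(D)}\le C$, contributing the additive constant in the final bound.

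The subtle bookkeeping is the $\sigma$-scaling for the $q_2$-piece. Plugging the bound on $\|v_2\|_{p_2}$ into the estimate for $w_2$ produces a factor $\sigma^{(N/2^*_\alpha-N/p_2)(N+2\alpha)/(N-2\alpha)}$, and a direct algebraic check using $N/2^*_\alpha=(N-2\alpha)/2$ reveals that this exponent equals $N/2^*_\alpha-N/q_2$, exactly the scaling required by the definition of $\|\cdot\|_{q_1,q_2,\sigma}$. Assembling the three pieces through the triangle-type inequality built into the norm, one obtains
\[
\|w(\cdot,0)\|_{q_1,q_2,\sigma}\le C\bigl(\beta^{(N+2\alpha)/(N-2\alpha)}+1\bigr),
\]
and letting $\varepsilon\to 0$ yields the claim.

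The main obstacle I anticipate is not the scaling algebra (which is mechanical) but rather securing the linear $L^{r}\!\to\!L^{q}$ trace estimate for the extension operator with the degenerate weight $y^{1-2\alpha}$ on the cylinder $\mathcal{C}_D$ in the full admissible range $r\in(1,N/(2\alpha))$; this is what Appendix~A is devoted to and is the substantive analytic ingredient behind the two-parameter nonlinear bound.
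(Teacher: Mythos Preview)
Your proposal is correct and follows essentially the same route as the paper: decompose $v\le v_1+v_2$ according to the definition of $\|\cdot\|_{p_1,p_2,\sigma}$, use the convexity inequality to split the source $v^{2^*_\alpha-1}$, invoke the comparison principle, apply the linear $L^{r}\to L^{q}$ trace estimate from Appendix~A (Proposition~\ref{lem:2.2}) to each piece, and verify the $\sigma$-scaling algebra before letting $\varepsilon\to 0$. The only cosmetic difference is that the paper absorbs the constant source $A$ into the $w_1$ problem and estimates it via $\|A\|_{L^{r_1}(D)}\le A|D|^{1/r_1}$, whereas you treat it as a separate piece $w_3$ bounded in $L^\infty$; both are fine.
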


\begin{proof}
Choose  $v_1\ge 0$ and $v_2\ge 0$, with  $|v|\le v_1+v_2$ and

$$
\|v_1\|_{L^{p_1}(D)}\leq (\|v\|_{p_1,p_2,\sigma}+\varepsilon), \quad\|v_2\|_{L^{p_2}(D)}\leq \sigma^{\frac N{2^*_\alpha}-\frac
N{p_2}}( \|v\|_{p_1,p_2,\sigma}+\varepsilon).
$$
Now we consider the following problems

\begin{equation}\label{eq:3.12}
\left\{ \arraycolsep=1.5pt
\begin{array}{lll}
- div(y^{1-2\alpha}\nabla w_1) = 0    &{\rm in}\quad  \mathcal{C}_D, \\[2mm]
 w_1=0  & {\rm on}\quad \partial_L\mathcal{C}_D, \\[2mm]
 y^{1-2\alpha}\frac{\partial w_1}{\partial \nu}= 2^{\frac {4\alpha}{N-2\alpha}} v_1^{\frac {N+2\alpha}{N-2\alpha}} + A\quad  & {\rm on}\quad
D\times\{0\},\\[2mm]
 \end{array}
 \right.
 \end{equation}
and

\begin{equation}\label{eq:3.13}
\left\{ \arraycolsep=1.5pt
\begin{array}{lll}
- div(y^{1-2\alpha}\nabla w_2)= 0    &{\rm in}\quad  \mathcal{C}_D, \\[2mm]
 w_2 = 0  & {\rm on}\quad \partial_L\mathcal{C}_D, \\[2mm]
 y^{1-2\alpha}\frac{\partial w_2}{\partial \nu}= 2^{\frac {4\alpha}{N-2\alpha}} v_2^{\frac {N+2\alpha}{N-2\alpha}}\quad  & {\rm on}\quad
D\times\{0\}.\\[2mm]
 \end{array}
 \right.
 \end{equation}
Since
$$
|v|^{\frac {N+2\alpha}{N-2\alpha}}\leq 2^{\frac {4\alpha}{N-2\alpha}}v_1^{\frac {N+2\alpha}{N-2\alpha}}+2^{\frac {4\alpha}{N-2\alpha}}v_2^{\frac {N+2\alpha}{N-2\alpha}},
$$
by comparison,
 $0\le w\leq w_1 + w_2$. Hence, we need to estimate $\|w_1(\cdot,0)\|_{L^{q_1}(D)}$ and $\|w_2(\cdot,0)\|_{L^{q_2}(D)}$. Since  $1<p_i\frac {N-2\alpha}{N+2\alpha}<\frac N{2\alpha}$, by Proposition~\ref{lem:2.2},
\begin{eqnarray*}
&&\|w_1(\cdot,0)\|_{L^{q_1}(D)}\\
&\leq& C(N,p_1)\|v_1^{\frac{N+2\alpha}{N-2\alpha}}+A
\|_{L^{p_1\frac {N-2\alpha}{N+2\alpha}}(D)}\\
&\leq& C(N,p_1)\big(\|v_1\|_{L^{p_1}(D)}^{\frac{N+2\alpha}{N-2\alpha}}+A|D|^{{\frac
1{p_1}\frac {N+2\alpha}{N-2\alpha}}}\big)\\&\leq&
C(N,p_1,D)\big((\|v\|_{p_1,p_2,\sigma}+\varepsilon)^{\frac {N+2\alpha}{N-2\alpha}}+1\big).
\end{eqnarray*}
Similarly, we have
\[
\|w_2(\cdot,0)\|_{L^{q_2}(D)}
\leq C\|v_2\|_{L^{p_2}D)}^{\frac{N+2\alpha}{N-2\alpha}}
\leq C( \|v\|_{p_1,p_2,\sigma}+\varepsilon)^{\frac{N+2\alpha}{N-2\alpha}}\sigma^{(\frac N{2^*_\alpha}-\frac
N{p_2})\frac{N+2\alpha}{N-2\alpha}}.
\]
Since
$$
\big(\frac N{2^*_\alpha}-\frac
N{p_2}\big)\frac{N+2\alpha}{N-2\alpha}=\frac N{2^*_\alpha}-\frac
N{q_2},
$$
$w_1, w_2$ satisfies \eqref{eq:3.2} with
$\alpha=C\big((\|v\|_{p_1,p_2,\sigma}+\varepsilon)^{\frac
{N+2\alpha}{N-2\alpha}} + 1\big)$. The proof is completed by letting $\varepsilon\to 0$.

\end{proof}

\begin{Lemma}\label{l1-31}
Let $w_n$ be a solution of \eqref{eq:3.3a}.  There are constants  $C>0$,  $q_1, q_2\in
(\frac{N}{N-2\alpha},$ $ +\infty)$, $q_2< 2_\alpha^*<q_1$,  such that

\begin{equation}
\|w_n\|_{q_1,q_2,\sigma_n}\leq C.
\end{equation}
\end{Lemma}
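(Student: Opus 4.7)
My plan is to reduce Lemma~\ref{l1-31} to a single application of Lemma~\ref{lem:3.2}, by first producing an initial estimate $\|v_n\|_{p_1,p_2,\sigma_n}\le C$ for some exponents $p_1,p_2$ lying in the hypothesis range $\frac{N+2\alpha}{N-2\alpha}<p_2<2^*_\alpha<p_1<\frac{N(N+2\alpha)}{2\alpha(N-2\alpha)}$. Once such an estimate is in hand, Lemma~\ref{lem:3.2} yields
\[
\|w_n\|_{q_1,q_2,\sigma_n}\le C\bigl(\|v_n\|_{p_1,p_2,\sigma_n}^{(N+2\alpha)/(N-2\alpha)}+1\bigr)\le C,
\]
and a short computation with the formula \eqref{eq:3.11} shows that taking $p_2$ just above $\frac{N+2\alpha}{N-2\alpha}$ forces $q_2$ to land just above $\frac{N}{N-2\alpha}$, while $q_1>2^*_\alpha$ automatically; these are precisely the exponents the lemma requires.

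For the initial estimate I invoke Proposition~\ref{prop:2.1}, writing $v_n = v_0 + \sum_{j=1}^k \rho_{x_n^j,\sigma_n^j}(W^j) + r_n$ with $\|r_n\|_{H^1_{0,L}(\mathcal{C}_\Omega)}\to 0$, and bound each piece separately using subadditivity of the norm $\|\cdot\|_{p_1,p_2,\sigma_n}$. The limit $v_0$, as a solution of the critical limit problem \eqref{eq:1.3}, lies in $L^\infty(\Omega)$ by standard regularity, so $\|v_0\|_{p_1,p_2,\sigma_n}\le \|v_0\|_{L^{p_1}(\Omega)}\le C$. For a bubble $\rho_{x_n^j,\sigma_n^j}(W^j)$ I choose the splitting $u_1=0$, $u_2=\rho_{x_n^j,\sigma_n^j}(W^j)$: the scaling identity
\[
\|\rho_{x_n^j,\sigma_n^j}(W^j)(\cdot,0)\|_{L^{p_2}(\Omega)} \le (\sigma_n^j)^{N/2^*_\alpha - N/p_2}\,\|W^j(\cdot,0)\|_{L^{p_2}(\mathbb{R}^N)},
\]
combined with $p_2<2^*_\alpha$ (so the exponent is negative) and $\sigma_n^j\ge\sigma_n$, gives the required bound by $\|W^j(\cdot,0)\|_{L^{p_2}}\,\sigma_n^{N/2^*_\alpha - N/p_2}$. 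The $L^{p_2}$-integrability of $W^j(\cdot,0)$ follows from its decay of order $|x|^{-(N-2\alpha)}$ at infinity (Appendix~B), since $p_2>\frac{N+2\alpha}{N-2\alpha}>\frac{N}{N-2\alpha}$.

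The delicate step is the remainder $r_n$. The convergence $r_n\to 0$ in $H^1_{0,L}$ only directly yields $r_n(\cdot,0)\to 0$ in $L^{2^*_\alpha}(\Omega)$ with no a priori rate, which is insufficient to absorb against the scale $\sigma_n^{N/2^*_\alpha - N/p_2}\to 0$ built into the norm. My approach is to truncate $r_n = a_n + b_n$ with $a_n = r_n\chi_{\{|r_n|\le M_n\}}$, choosing the level $M_n$ so that $\|a_n\|_{L^{p_1}(\Omega)}$ is uniformly bounded while Chebyshev's inequality pushes $\|b_n\|_{L^{p_2}(\Omega)}$ below the required scale. To close the balance one needs a quantitative pointwise control of $v_n$ away from the concentration points $x_n^j$, which can be obtained by applying the linear estimate of Lemma~\ref{lem:3.1} on annular regions where $|v_n|^{2^*_\alpha-2}$ has small $L^{N/(2\alpha)}$-norm (a standard concentration-compactness splitting).

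The principal obstacle is exactly this quantitative handling of $r_n$: promoting the qualitative Sobolev convergence $\|r_n\|_{H^1_{0,L}}\to 0$ to a $\sigma_n$-scale estimate compatible with the weighted norm. Once this is overcome the rest is routine: subadditivity gives $\|v_n\|_{p_1,p_2,\sigma_n}\le C$, and a single invocation of Lemma~\ref{lem:3.2} completes the proof.
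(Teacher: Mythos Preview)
Your plan correctly identifies the remainder $r_n$ as the only obstacle, but the truncation scheme you sketch cannot close. The Struwe decomposition gives you only the qualitative statement $\|r_n(\cdot,0)\|_{L^{2^*_\alpha}}=:\epsilon_n\to 0$, with no relation whatsoever between $\epsilon_n$ and $\sigma_n$. Your truncation $r_n=a_n+b_n$ at level $M_n$ yields, after optimizing over $M_n$, a bound of the type $\|b_n\|_{L^{p_2}}\le C\,\epsilon_n^{\gamma}$ for some $\gamma>0$; but the weighted norm requires $\|b_n\|_{L^{p_2}}\le C\,\sigma_n^{N/2^*_\alpha-N/p_2}$, and there is simply no inequality linking $\epsilon_n$ to a power of $\sigma_n$. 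The vague appeal to ``quantitative pointwise control of $v_n$ away from the concentration points'' would itself require an a priori bound on $\|v_n\|_{p_1,p_2,\sigma_n}$, which is exactly what you are trying to prove. So the argument is circular at this step.

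The paper avoids this difficulty by a contraction (absorption) trick that you should adopt. Rather than splitting $v_n$ itself, one splits the \emph{coefficient} in the nonlinearity: with $v_n=v_0+v_{n,1}+v_{n,2}$ (bubbles plus remainder) and $a_i=C|v_{n,i}|^{2^*_\alpha-2}$, comparison gives
\[
w_n\le G(a_0|v_n|+A)+G(a_1|v_n|)+G(a_2|v_n|).
\]
The first piece lands in $L^{q_1}$ because $a_0\in L^\infty$; the second lands in $L^{q_2}$ with the correct $\sigma_n$-weight because $\|a_1\|_{L^r}\le C\sigma_n^{N/2^*_\alpha-N/q_2}$ by the bubble decay; and for the remainder one applies Lemma~\ref{lem:3.1} directly to get
\[
\|G(a_2|v_n|)\|_{q_1,q_2,\sigma_n}\le C\|a_2\|_{L^{N/(2\alpha)}}\|v_n\|_{q_1,q_2,\sigma_n}\le \tfrac12\|w_n\|_{q_1,q_2,\sigma_n},
\]
since $\|a_2\|_{L^{N/(2\alpha)}}\le C\|v_{n,2}\|_{L^{2^*_\alpha}}^{2^*_\alpha-2}\to 0$. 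This term is then absorbed into the left-hand side. The point is that the remainder only needs to contribute a \emph{small multiplicative factor}, not a $\sigma_n$-scaled additive bound; Lemma~\ref{lem:3.1} was designed precisely for this purpose, and Lemma~\ref{lem:3.2} is not used at all in the proof of Lemma~\ref{l1-31}.
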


\begin{proof} Since $\{\|v_n\|_{H^1_{0,L}(\mathcal{C}_\Omega)}\}$ is uniformly bounded, we may assume $v_n\rightharpoonup v_0$. By Proposition \ref{prop:2.1}, we may write $v_n=v_0+v_{n,1}+v_{n,2}$, where
\[
v_{n,1}(x,y) = \sum_{j=1}^k\rho_{x_n^j,\sigma_n^j}(W_j)
\]
$v_{n,2} = v_n - v_0- v_{n,1}$. Let $a_0=C |v_{0}|^{\frac
{4\alpha}{N-2\alpha}}$ and $a_i=C |v_{n,i}|^{\frac
{4\alpha}{N-2\alpha}}$, $i=1,2$ for $C>0$ large.

Denote by $w=G(v)$ the solution of the following problem
\begin{equation}\label{eq:3.18}
\left\{ \arraycolsep=1.5pt
\begin{array}{lll}
- div(y^{1-2\alpha}\nabla w) = 0\quad    &{\rm in}\quad  \mathcal{C}_D, \\[2mm]
w = 0  & {\rm on}\quad \partial_L\mathcal{C}_D, \\[2mm]
 y^{1-2\alpha}\frac{\partial w}{\partial \nu}= v \quad  & {\rm on}\quad
D\times\{0\}.\\[2mm]
 \end{array}
 \right.
 \end{equation}
By the comparison theorem,
\[
w_n\leq G(a_0(\cdot,0)|v_n(\cdot,0)| + A) + G(a_1(\cdot,0)|v_n(\cdot,0)|) + G(a_2(\cdot,0)|v_n(\cdot,0)|).
\]

Note that $v_0\in L^\infty(\Omega)$.  So $a_0\in L^\infty(D)$.
 Taking $\frac{2N}{N+2\alpha}<p<2^*_\alpha$, since $N>2\alpha$, we have $\frac N{2\alpha}>2^*_\alpha$, and then $q_1:=\frac{Np}{N-2\alpha p}> 2^*_\alpha$. By Proposition~\ref{lem:2.2} and H\"{o}lder's inequality,
\[
\begin{split}
&\|G(a_0(\cdot,0)|v_n(\cdot,0)|+A)(\cdot,0)\|_{L^{q_1}(D)}\\
&\leq C\|v_n(\cdot,0)\|_{L^p(D)}+C\le
 C\|v_n(\cdot,0)\|_{L^{2^*_\alpha}(D)}+C\le C.
\end{split}
\]
This implies that for any $q_2< 2^*_\alpha$,
\[
\|G(a_0(\cdot,0)|v_n(\cdot,0)|+A)(\cdot,0)\|_{q_1,q_2,\sigma_n}\leq \|G(a_0(\cdot,0)
|v_n(\cdot,0)|+A)(\cdot,0)\|_{L^{q_1}(D)}\leq C.
\]

To estimate   $G(a_1(\cdot,0)|v_n(\cdot,0)|)(\cdot,0)$,
we choose $r$ such that $\frac N{4\alpha}<r< \frac N{2\alpha}$ and $\frac 1{q_2} = \frac 1r + \frac 1{2^*_\alpha} - \frac {2\alpha}N$, we have  $\frac{2N}{N+2\alpha}<q_2<2^*_\alpha$.
By Corollary~\ref{lem:2.3},
\[
\|G(a_1(\cdot,0)|v_n(\cdot,0)|)(\cdot,0)\|_{L^{q_2}(D)}\leq C\|a_1(\cdot,0)\|_{L^{r}(\Omega)}\|v_n(\cdot,0)\|_{L^{2^*_\alpha}(\Omega)}.
\]
Noting that $\frac {N-2\alpha r}r = (\frac 1{q_2} - \frac 1{2^*_\alpha})N$, we find
\[
\|a_1(\cdot,0)\|_{L^{r}(\Omega)} \leq \sum_{j=1}^k(\sigma_n^j)^{-\frac{N-2\alpha r}r}\bigg(\int_{\mathbb{R}^N}|W^j|^{\frac{4r\alpha}{N-2\alpha}}\,dx\bigg)^{\frac 1r} \leq C\sigma_n^{\frac N{2^*_\alpha}-\frac N{q_2}},
\]
since, by Proposition~\ref{p1-30},

\[
|W^j|^{\frac{4r\alpha}{N-2\alpha}}\le \frac{C}{ (1+|X|)^{4r\alpha}}
\]
and  $4r\alpha>N$.
Therefore,
\[
\|G(a_1(\cdot,0)w_n(\cdot,0))(\cdot,0)\|_{q_1,q_2,\sigma_n}
\leq \|G(a_1(\cdot,0)w_n(\cdot,0))(\cdot,0)\|_{L^{q_2}(\Omega)}\sigma_n^{\frac N{q_2}-\frac N{2^*_\alpha}}\leq C.
\]
Using Lemma \ref{lem:3.1}, we deduce
\[
\begin{split}
&\|G(a_2(\cdot,0)|v_n(\cdot,0)|)(\cdot,0)\|_{q_1,q_2,\sigma_n}\\
&\leq \|a_2(\cdot,0)\|_{L^{\frac N{2\alpha}}(\Omega)}
\|v_n(\cdot,0)\|_{q_1,q_2,\sigma_n}
\leq \frac12\|w_n(\cdot,0)\|_{q_1,q_2,\sigma_n}.\\
\end{split}
\]
Consequently,
\[
\begin{split}
&\|w_n(\cdot,0)\|_{q_1,q_2,\sigma_n}\\
&\leq 2\|G(a_0(\cdot,0)w_n(\cdot,0))(\cdot,0)\|_{q_1,q_2,\sigma_n} + 2\|G(a_1(\cdot,0)w_n(\cdot,0))(\cdot,0)\|_{q_1,q_2,\sigma_n}\\
&\leq C.\\
\end{split}
\]
The proof is complete.
\end{proof}

  \begin{proof}[Proof of Proposition~\ref{prop:3.1}]

Since $|v_n|\leq w_n$, by Lemmas~\ref{lem:3.2}  and \ref{l1-31}, for
any constants $q_1, q_2\in (\frac{N}{N-2\alpha},$ $ +\infty)$, $q_2<
2_\alpha^*<q_1$,  it holds

\begin{equation}\label{10-31-8}
\|v_n\|_{q_1,q_2,\sigma_n}\le  \|w_n\|_{q_1,q_2,\sigma_n}\leq C.
\end{equation}
So the result follows.
\end{proof}

\section {Estimates on safe regions}

\setcounter{equation}{0}

\bigskip

Since $\|v_n\|_E$ is uniformly bounded in $n$, the number of the bubble of $v_n$ is also uniformly bounded in $n$, and we can find a constant $\bar C>0$, independent of $n$, such that the region
\[
\mathcal{A}_n^1=\{X=(x,y):\; X\in \bigg(\mathcal{B}_{(\bar C+5)\sigma_n^{-\frac
12}}(x_n,0)\setminus \mathcal{B}_{\bar C\sigma_n^{-\frac 12}}(x_n,0)\bigg)\cap \mathcal{C}_\Omega\}
\]
does not contain any concentration point of $v_n$ for any $n$, where $\mathcal{B}_r(z)$ is the ball in $\mathbb{R}^{N+1}$ centered at $z$ with the radius $r$. We call $\mathcal{A}_n^1$ safe region.
Let
$$\mathcal A_n^2=\{X:  \;X\in\bigg(\mathcal{B}_{(\bar C+4)\sigma_n^{-\frac
12}}(x_n,0)\setminus \mathcal{B}_{(\bar C+1)\sigma_n^{-\frac 12}}(x_n,0)\bigg)\cap \mathcal{C}_\Omega\}
$$
and
$$
\mathcal A_n^3=\{X:\; X\in\bigg(\mathcal{B}_{(\bar
C+3)\sigma_n^{-\frac 12}}(x_n,0)\setminus \mathcal{B}_{(\bar
C+2)\sigma_n^{-\frac 12}}(x_n,0)\bigg)\cap \mathcal{C}_\Omega\}.
$$

In this section, we will prove the following result.

\begin{Proposition}\label{prop:4.1}
 There is a constant $C>0$, independent of $n$, such that
\begin{equation}\label{eq:4.2}
\bigg(\int_{\mathcal{A}_n^2}y^{1-2\alpha}|v_n|^p\,dxdy\bigg)^{\frac 1p}\leq  C\sigma_n^{-\frac{N+2-2\alpha}{2p}}
\end{equation}
and
\begin{equation}\label{eq:4.2a}
\int_{\mathcal{A}_n^2\cap\{y=0\}}|v_n|^p\leq C\sigma_n^{-\frac{N}{2}}
\end{equation}
for any $p\geq 1$.
\end{Proposition}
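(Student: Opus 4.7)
My plan is to deduce both estimates from a uniform pointwise bound $\|v_n\|_{L^\infty(\mathcal{A}_n^2)}\le C$, independent of $n$. Once this is established, the two conclusions follow at once from the elementary volume computations
\[
\int_{\mathcal{A}_n^2}y^{1-2\alpha}\,dxdy\le C\sigma_n^{-(N+2-2\alpha)/2}\quad\text{and}\quad |\mathcal{A}_n^2\cap\{y=0\}|\le C\sigma_n^{-N/2},
\]
which hold since $\mathcal{A}_n^2$ is an annulus in $\mathbb{R}^{N+1}_+$ with both radii of order $\sigma_n^{-1/2}$; multiplying the $L^\infty$-bound against these two measures and taking the $p$-th root gives exactly the stated estimates.

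To produce the $L^\infty$-bound, I would first exploit the fact that $\mathcal{A}_n^1\supset\mathcal{A}_n^2$ is bubble-free. Using Proposition~\ref{prop:2.1}, the decay estimate for the bubbles $W^j$ from Appendix~A (which guarantees that each $\rho_{x_n^j,\sigma_n^j}(W^j)$ restricted to $\mathcal{A}_n^1$ is being evaluated at distance of order at least $\sigma_n^{1/2}$ from its centre in rescaled coordinates), and the fact that $|\mathcal{A}_n^1\cap\{y=0\}|\to 0$, I would show the smallness
\[
\|v_n(\cdot,0)\|_{L^{2^*_\alpha}(\mathcal{A}_n^1\cap\{y=0\})}\to 0,
\]
which, together with $p_n\to 2^*_\alpha$, implies the scaling-critical smallness of the boundary coefficient $a_n(x):=|v_n(x,0)|^{p_n-2}+\lambda$, namely $\|a_n\|_{L^{N/(2\alpha)}(\mathcal{A}_n^1\cap\{y=0\})}=o(1)$. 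Viewing the PDE for $v_n$ on $\mathcal{A}_n^1$ as the linear problem
\[
\mathrm{div}(y^{1-2\alpha}\nabla v_n)=0\ \text{in}\ \mathcal{A}_n^1,\qquad y^{1-2\alpha}\partial_\nu v_n = a_n(x)v_n\ \text{on}\ \mathcal{A}_n^1\cap\{y=0\},
\]
I would then run a Moser iteration with test functions $\eta^{2}|v_n|^{2\beta}v_n$, where $\eta$ is a radial cutoff with $\eta\equiv 1$ on $\mathcal{A}_n^2$, $\mathrm{supp}\,\eta\subset\mathcal{A}_n^1$ and $|\nabla\eta|\le C\sigma_n^{1/2}$. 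Proposition~\ref{prop:3.1} supplies the initial scale-invariant local $L^{q_2}$-integrability at the $\sigma_n^{-1/2}$ scale; the smallness of $\|a_n\|_{L^{N/(2\alpha)}}$ makes the boundary term absorbable at each iteration step; and the $\sigma_n^{1/2}$ factor in $|\nabla\eta|$ is precisely balanced by the $\sigma_n^{-1/2}$ length-scale in the weighted Sobolev and trace inequalities of Appendix~A, so that all constants remain uniform in $n$. After finitely many steps the iteration produces the required bound $\|v_n\|_{L^\infty(\mathcal{A}_n^2)}\le C$.

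The principal difficulty, in my view, is the careful scale-tracking inside the Moser iteration: the degenerate weight $y^{1-2\alpha}$, the shrinking annulus $\mathcal{A}_n^1$ of thickness of order $\sigma_n^{-1/2}$, and the scaling-critical smallness of $a_n$ must all combine so that each iteration step closes with an $n$-independent constant. As noted in the introduction, this boundary refinement of the interior arguments of \cite{DS} is the technical heart of the present paper.
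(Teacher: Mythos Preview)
Your overall architecture — show that the coefficient $a_n=|v_n(\cdot,0)|^{p_n-2}+\lambda$ is $L^{N/(2\alpha)}$-small on the bubble-free annulus, run a Moser iteration at scale $\sigma_n^{-1/2}$, and then read off the proposition from a pointwise bound and a volume count — is exactly the skeleton the paper uses. The paper in fact rescales to unit balls and applies Lemma~\ref{lem:4.3}, which is precisely the Moser $L^1\to L^p$ step you describe; your direct scale-tracking version is equivalent.

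The genuine gap is the \emph{starting point} for the iteration. A Moser scheme at scale $\sigma_n^{-1/2}$ closes with $n$-independent constants only if you feed it a scale-invariant bulk bound, i.e.\ something like
\[
\int_{\mathcal B^+_{\sigma_n^{-1/2}}(z)} y^{1-2\alpha}|v_n|\,dX \;\le\; C\,\sigma_n^{-\frac{N+2-2\alpha}{2}},\qquad z\in\mathcal A_n^2,
\]
so that after rescaling $\tilde v_n(X)=v_n(\sigma_n^{-1/2}X)$ one has $\int_{\mathcal B^+_1}y^{1-2\alpha}|\tilde v_n|\le C$. Neither the global energy bound $\|v_n\|_{H^1_{0,L}}\le C$ nor Proposition~\ref{prop:3.1} delivers this. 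The energy bound is not scale-invariant: rescaled, it blows up like $\sigma_n^{(N+2-2\alpha)/2}$. Proposition~\ref{prop:3.1} is a \emph{trace} estimate via a global splitting $|v_n(\cdot,0)|\le u_1+u_2$ with $\|u_1\|_{q_1}\le C$ and $\|u_2\|_{q_2}\le C\sigma_n^{N/2^*_\alpha-N/q_2}$; restricting these global bounds to a ball of measure $\sim\sigma_n^{-N/2}$ does not yield the scale-invariant $L^{q}$ control you assert (for $u_1$ one only gets $\|u_1\|_{L^{q_2}(B)}\lesssim \sigma_n^{-\frac{N}{2}(1/q_2-1/q_1)}$, strictly larger than the required $\sigma_n^{-N/(2q_2)}$), and in any case it says nothing about the weighted bulk integral needed on the right-hand side of the Caccioppoli step.

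The paper supplies this missing input through Lemma~\ref{lem:4.1}: a bound on the weighted spherical averages
\[
\frac{1}{r^{N+1-2\alpha}}\int_{\partial\mathcal B_r^+(z)\cap\{y>0\}} y^{1-2\alpha} w_n\,dS \le C,\qquad r\ge \bar C\,\sigma_n^{-1/2},
\]
obtained from a Green's-function monotonicity identity for the degenerate operator together with Proposition~\ref{prop:3.1} (used to control the nonlinear boundary flux $\int_{B_t\cap\{y=0\}}|v_n|^{2^*_\alpha-1}$). Integrating in $r$ over $[\bar C\sigma_n^{-1/2},(\bar C+5)\sigma_n^{-1/2}]$ gives exactly the scale-invariant weighted $L^1$ bound above, and then Lemma~\ref{lem:4.3} finishes. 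Your proposal needs an analogue of Lemma~\ref{lem:4.1}; without it the ``precise balancing'' you invoke in the iteration does not occur.
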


To prove Proposition \ref{prop:4.1}, we need the following lemmas.
\begin{Lemma}\label{lem:4.1}
Let $w_n$ be a  solution of  \eqref{eq:3.3a}.
 There is a constant, independent of $n$, such that

\[
\frac{1}{r^{N+1-2\alpha}}\int_{\partial \mathcal{ B}_{r}^+(z)\cap\{y>0\}}y^{1-2\alpha}w_n\,dS\leq C
\]
for all $r\ge \bar C\sigma_n^{-1/2}$  and  $z=(z',0)$ with $z'\in \Omega$.

\end{Lemma}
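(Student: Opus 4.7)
The plan is to dominate $w_n$ by an explicit Riesz-type potential on $\mathbb{R}^{N+1}_+$ and then estimate that potential using Proposition~\ref{prop:3.1}. Let $\tilde g_n$ denote the extension by zero outside $D$ of $g_n:=2|v_n(\cdot,0)|^{2^*_\alpha-1}+A$, and let $W_n\geq 0$ be the decaying solution in $\mathbb{R}^{N+1}_+$ of ${\rm div}(y^{1-2\alpha}\nabla W_n)=0$ with Neumann data $-y^{1-2\alpha}\partial_y W_n|_{y=0^+}=\tilde g_n$. Since $W_n-w_n$ is $y^{1-2\alpha}$-harmonic in $\mathcal{C}_D$, non-negative on $\partial_L\mathcal{C}_D$, and has zero Neumann flux on $D\times\{0\}$, testing the weak equation against $(W_n-w_n)^-\in H^1_{0,L}(\mathcal{C}_D)$ yields the comparison $0\leq w_n\leq W_n$ in $\mathcal{C}_D$.

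Next, even reflection of $W_n$ across $\{y=0\}$ converts the Neumann problem into ${\rm div}(|y|^{1-2\alpha}\nabla \tilde W_n)=-2\tilde g_n(x)\delta(y)$ on $\mathbb{R}^{N+1}$. The scaling invariance of the operator forces its fundamental solution to be $G(X)=c|X|^{-(N-2\alpha)}$, so convolving with the source gives
\[
W_n(X)=c\int_{\mathbb{R}^N}\frac{\tilde g_n(x')\,dx'}{|X-(x',0)|^{N-2\alpha}}.
\]
A direct computation in polar coordinates on $S^N_+$ (using $y=r\omega_{N+1}$) shows
\[
\int_{\partial \mathcal{B}_r^+(z)\cap\{y>0\}}\frac{y^{1-2\alpha}\,dS}{|X-(x',0)|^{N-2\alpha}}\leq C\,\frac{r^{N+1-2\alpha}}{(r+|x'-z'|)^{N-2\alpha}},
\]
so that
\[
\frac{1}{r^{N+1-2\alpha}}\int_{\partial\mathcal{B}_r^+(z)\cap\{y>0\}}y^{1-2\alpha}w_n\,dS\leq C\int_D\frac{g_n(x')\,dx'}{(r+|x'-z'|)^{N-2\alpha}}.
\]

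It remains to bound the right-hand side uniformly in $n$ and in $r\geq \bar C\sigma_n^{-1/2}$. Split the integral into the short range $|x'-z'|\leq r$ and the long range $|x'-z'|>r$. The short-range piece is dominated by $Cr^{-(N-2\alpha)}\int_{|x'-z'|\leq r}g_n\,dx'$; inserting the bubble decomposition from Proposition~\ref{prop:2.1} together with the pointwise bubble decay $|\rho_{x_n^j,\sigma_n^j}(W^j)(x,0)|^{2^*_\alpha-1}\leq C(\sigma_n^j)^{(N+2\alpha)/2}(1+\sigma_n^j|x-x_n^j|)^{-(N+2\alpha)}$ (to be established in Appendix~A) shows each bubble contributes an $L^1$-mass of order $(\sigma_n^j)^{-(N-2\alpha)/2}$. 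Together with $r\geq \bar C\sigma_n^{-1/2}$ and $\sigma_n\leq\sigma_n^j$ this bounds the short-range piece by $C(\sigma_n/\sigma_n^j)^{(N-2\alpha)/2}\leq C$. The $v_0$ (lying in $L^\infty$) and $A$ contributions both give $O(r^{2\alpha})$ and are trivially bounded. For the long-range piece, apply H\"older's inequality with the decomposition $|v_n|\leq u_1+u_2$ from Proposition~\ref{prop:3.1}, choosing $q_1$ slightly below $\frac{N(2^*_\alpha-1)}{2\alpha}$ and $q_2$ slightly below $2^*_\alpha$; the factor $\|u_2\|_{q_2}\leq C\sigma_n^{\frac{N}{2^*_\alpha}-\frac{N}{q_2}}$ is compensated by the corresponding negative power of $r$, which is at most $\bar C^{-1}\sigma_n^{1/2}$ by hypothesis.

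The main obstacle will be the short-range estimate: since a blow-up center $x_n^j$ may lie arbitrarily close to $z'$, and the bubble's $L^1$-mass $(\sigma_n^j)^{-(N-2\alpha)/2}$ is rigid, only the specific scaling $r\geq \bar C\sigma_n^{-1/2}$ precisely balances this mass against the Riesz-potential singularity $r^{-(N-2\alpha)}$ to give a bounded quotient; a larger power such as $\sigma_n^{-1}$ in the safe-region radius would already fail.
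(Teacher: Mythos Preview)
Your comparison with the half-space Riesz potential $W_n$ is a legitimate alternative to the paper's argument, and in fact the two routes lead to the same quantity to estimate: the paper derives a mean-value/monotonicity formula
\[
\frac{d}{ds}\Bigl(\frac{1}{s^{N-2\alpha+1}}\int_{\partial\mathcal B_s^+(z)\cap\{y>0\}}y^{1-2\alpha}w_n\,dS\Bigr)
=-\frac{1}{s^{N-2\alpha+1}}\int_{\mathcal B_s(z)\cap\{y=0\}}f_n\,dx,
\]
integrates from $r$ to a macroscopic radius $r_n\in[1/2,1]$ at which the average is bounded, and is left with $\int_r^{r_n}t^{-(N+1-2\alpha)}\int_{\mathcal B_t\cap\{y=0\}}f_n\,dx\,dt$; after Fubini this is precisely your $\int_D g_n(x')\,(r+|x'-z'|)^{-(N-2\alpha)}\,dx'$ up to constants. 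So your reduction is fine.

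The gap is in the short-range estimate. You invoke the bubble decomposition of Proposition~\ref{prop:2.1} to bound $\int_{|x'-z'|\le r}|v_n|^{2^*_\alpha-1}\,dx'$, but that proposition gives only $\|v_n-v_0-\sum_j\rho_{x_n^j,\sigma_n^j}(W^j)\|_{H^1_{0,L}}=o(1)$, hence merely $\|v_{n,2}(\cdot,0)\|_{L^{2^*_\alpha}}=o(1)$ with no quantitative rate. H\"older then gives
\[
r^{-(N-2\alpha)}\int_{|x'-z'|\le r}|v_{n,2}|^{2^*_\alpha-1}\,dx'
\le C\,o(1)\,r^{-(N-2\alpha)/2}\le C\,o(1)\,\sigma_n^{(N-2\alpha)/4},
\]
which is unbounded. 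The bubble pointwise decay (which is Proposition~\ref{p1-30} in Appendix~B, not Appendix~A) handles only the profile part $v_{n,1}$; the remainder $v_{n,2}$ is genuinely uncontrolled by your argument.

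The fix, and what the paper actually does, is to use Proposition~\ref{prop:3.1} for \emph{both} ranges rather than only the long range. With $|w_n(\cdot,0)|\le v_{1,n}+v_{2,n}$, $\|v_{1,n}\|_{q_1}\le C$ for $q_1$ large, and $\|v_{2,n}\|_{q_2}\le C\sigma_n^{N/2^*_\alpha-N/q_2}$ with $q_2=\frac{N+2\alpha}{N-2\alpha}=2^*_\alpha-1$, one has $\int_D v_{2,n}^{2^*_\alpha-1}\le C\sigma_n^{-(N-2\alpha)/2}$, and then $r^{-(N-2\alpha)}\cdot\sigma_n^{-(N-2\alpha)/2}\le C$ precisely because $r\ge\bar C\sigma_n^{-1/2}$. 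This is the computation \eqref{neq:4.6} in the paper. Replacing your short-range bubble argument by this use of Proposition~\ref{prop:3.1} closes the gap and aligns your proof with the paper's.
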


\begin{proof} For $X = (x,y), z=(z',0)$,
\[
\Gamma(X,z) = \frac 1{|X-z|^{N-2\alpha}} - \frac 1{s^{N-2\alpha}}
\]
satisfies
\[
div(y^{1-2\alpha}\nabla_X \Gamma(X,z)) = 0 \quad {\rm in}\quad  \mathcal{B}_s(z)\setminus \{z\}; \quad \Gamma
(X,z)= 0, \quad X\in \partial \mathcal{B}_s(z),
\]
where $\mathcal{B}_s(z)\subset\mathbb{R}^{N+1}$ is a ball centered at $z$ with radius $s$.

Denote $f_n = 2|v_n|^{2_\alpha^*-1} +A$. Integrating by parts,  we find that for  $\delta\in (0, s)$,

\begin{equation}\label{1-4-8}
\begin{split}
0=&\int_{\mathcal {B}_s^+(z)\setminus \mathcal {B}_\delta^+(z)}div(y^{1-2\alpha}\nabla w_n)\Gamma(X,z)\,dX\\
= &\int_{\partial( \mathcal {B}_s^+(z)\setminus \mathcal {B}_\delta^+(z))}y^{1-2\alpha}\frac{\partial w_n}{\partial n}\Gamma(X,z)\,dS - \int_{
\partial( \mathcal {B}_s^+(z)\setminus \mathcal {B}_\delta^+(z))}y^{1-2\alpha} w_n \frac{\partial \Gamma}{\partial n}\,dS\\
=&\int_{\{y=0\}\cap (\mathcal {B}_s(x)\setminus \mathcal {B}_\delta(z))}f_n \Gamma(X,z)\,dX+
\int_{\{y>0\}\cap \partial \mathcal {B}_\delta(z)}y^{1-2\alpha}\frac{\partial w_n}{\partial n}\Gamma(X,z)\,dS
\\
&- \int_{\{y>0\}\cap \partial( \mathcal {B}_s(z)\setminus \mathcal {B}_\delta(z))
}y^{1-2\alpha} w_n \frac{\partial \Gamma}{\partial n}\,dS,
\end{split}
\end{equation}
since

\begin{equation}\label{2-4-8}
\Gamma(X,z)= 0, \quad X\in \partial \mathcal {B}_s(z),
\end{equation}
and

\[
y^{1-2\alpha}  \frac{\partial \Gamma(X,z)}{\partial n}=-\frac{(N-2\alpha) y^{2-2\alpha}}
{
|X-z|^{N-2\alpha +2}}=0,\quad X\in \{ y=0\} \cap ( \mathcal {B}_s(z)\setminus \mathcal {B}_\delta(z)).
\]
Differentiating \eqref{1-4-8} with respect to  $s$, using \eqref{2-4-8},
we are led to

\begin{equation}\label{3-4-8}
\begin{split}
&
\int_{\{y=0\}\cap (\mathcal {B}_s(x)\setminus \mathcal {B}_\delta(z))}f_n \frac{N-2\alpha}{s^{N-2\alpha+1}}
\,dX+
\int_{\{y>0\}\cap \partial \mathcal {B}_\delta(z)}y^{1-2\alpha}\frac{\partial w_n}{\partial n}
\frac{N-2\alpha}{s^{N-2\alpha+1}}
\,dS
\\
&+ \frac{d}{ds}\int_{\{y>0\}\cap \partial \mathcal {B}_s(z)
}y^{1-2\alpha} w_n \frac{N-2\alpha}{s^{N-2\alpha+1}}\,dS=0.
\end{split}
\end{equation}
Letting $\delta\to 0$ in \eqref{3-4-8}, we obtain the following formula

\begin{equation}\label{4-4-8}
\frac{1}{s^{N-2\alpha+1}}\int_{\{y=0\}\cap \mathcal {B}_s(x)}f_n
\,dX+\frac{d}{ds}\Bigl(\frac{1}{s^{N-2\alpha+1}}\int_{\{y>0\}\cap \partial \mathcal {B}_s(z)
}y^{1-2\alpha} w_n \,dS\Bigr)=0,
\end{equation}
since

\[
y^{1-2\alpha}\frac{\partial w_n}{\partial n}\to  2|v_n(x,0)|^{2^*_\alpha-1} +A,\quad
\text{as}\; y\to 0.
\]
From
\[
\begin{split}
&\int_{\mathcal {B}_s(z)\cap\{y>0\}}y^{1-2\alpha}w_n\,dX \\
&\leq\bigg(\int_{\mathcal {B}_s(z)\cap\{y>0\}}y^{1-2\alpha}\,dX\bigg)^{\frac 12}\bigg(\int_{
\mathcal{C}}y^{1-2\alpha}w_n^2\,dX\bigg)^{\frac 12}
\le C,
\end{split}
\]
we can find a $r_n\in
\bigl[\frac12,1\bigr]$, such that

\[
\frac{1}{r_n^{N+1-2\alpha}}\int_{\partial \mathcal {B}_{r_n}(z)\cap\{y>0\}}y^{1-2\alpha}w_n\,dS\le C.
\]

Integrating \eqref{4-4-8}  from $r$  to $r_n$, we obtain

\begin{equation}\label{eq:4.3}
\begin{split}
&\frac{1}{r^{N+1-2\alpha}}\int_{\partial \mathcal {B}_{r}(z)\cap\{y>0\}}y^{1-2\alpha}w_n\,dS\\
=&\frac{1}{r_n^{N+1-2\alpha}}\int_{\partial \mathcal {B}_{r_n}(y)\cap\{y>0\}}y^{1-2\alpha}w_n\,dS
+\int_{r}^{r_n}
\frac{1}{t^{N+1-2\alpha}}\int_{ \mathcal {B}_t(z)\cap\{y=0\}
} f_n\,dSdt\\
\le & C+\int_{r}^{r_n}\frac{1}{t^{N+1-2\alpha}}\int_{\{y=0\}\cap \mathcal {B}_t(z)}\bigl(2|v_n|^{2^*_\alpha-1} + A\bigr)\,dxdt\\
\le & C+C\int_{r}^{r_n}\frac{1}{t^{N+1-2\alpha}}\int_{\{y=0\}\cap \mathcal {B}_t(y)}\bigl(
w_n^{2^*_\alpha-1}+ A\bigr)\,dxdt,\\
\end{split}
\end{equation}
since $|v_n|\le w_n$.

It is easy to check

\begin{equation}\label{eq:4.4}
\int_{r}^{r_n}\frac{1}{t^{N+1-2\alpha}}\int_{\{y=0\}\cap \mathcal {B}_t(z)} A\,dxdt\leq C\int_{r}^{r_n}t^{2\alpha-1}\,dt\leq C.
\end{equation}

By Proposition~\ref{prop:3.1}, we know that
$\|w_n(\cdot,0)\|_{q_1,q_2,\sigma_n}\leq C$ for any $\frac{N}{N-2\alpha}<q_2< 2_\alpha^*<q_1$.
Let  $q_1>2_\alpha^*$ large  such that

\[
-\frac{(N+2\alpha)}{q_1(N-2\alpha)}+2\alpha-1>-1.
\]

Let
\[
 q_2=\frac{N+2\alpha}{N-2\alpha}.
 \]
Then, we can choose $ v_{1,n}$, and $
v_{2,n}$, such that $|w_n(x,0)|\le v_{1,n}+ v_{2,n}$, and
\[
\| v_{1,n}\|_{q_1}\le C,
\]
and
\[
\| v_{2,n}\|_{q_2}\le C\sigma_n^{\frac{N}{2^*_\alpha}-\frac{N}{q_2}}.
\]

We have

\begin{equation}\label{eq:4.5}
\begin{split}
&\int_{r}^{r_n}\frac{1}{t^{N+1-2\alpha}}\int_{\{y=0\}\cap \mathcal {B}_t(z)}|v_{1,n}|^{2^*_\alpha-1}\,dxdt\\
\le  &\int_{r}^{r_n}\frac1{t^{N+1-2\alpha}}
\Bigl(\int_{\mathcal {B}_t(z)\cap \{y=0\}}|v_{1,n}|^{q_1}\,dx\Bigr)^{\frac{N+2\alpha}{(N-2\alpha)q_1}}t^{N(1-\frac{N+2\alpha}{(N-2\alpha)q_1})}\,dt\\
\le & C\int_{r}^{1} t^{-\frac{(N+2\alpha)}{q_1(N-2\alpha)}+2\alpha-1 }\,dt\le C.
\end{split}
\end{equation}

On the other hand, noting that  $r\ge \bar C\sigma_n^{-1/2}$,
\begin{equation}\label{neq:4.6}
\begin{split}
&\int_{r}^{r_n}\frac{1}{t^{N+1-2\alpha}}\int_{\{y=0\}\cap \mathcal {B}_t(z)}|v_{2,n}|^{2^*_\alpha-1}\,dxdt\\
\le  &C \sigma_n^{(\frac N{2^*_\alpha}-\frac N{q_2})q_2}\int_{r}^{r_n}\frac1{t^{N+1-2\alpha}}\,dt\le C \sigma_n^{(\frac N{2^*_\alpha}-\frac N{q_2})q_2} r^{2\alpha -N}\\
\le & C \sigma_n^{(\frac N{2^*_\alpha}-\frac N{q_2})q_2+\frac{N-2\alpha}2}= C.
\end{split}
\end{equation}

 Combining \eqref{eq:4.4}-\eqref{neq:4.6}, we obtain
\begin{equation}\label{eq:4.6}
\int_{r}^{r_n}\frac{1}{t^{N+1-2\alpha}}\int_{\{y=0\}\cap \mathcal {B}_t(z)}\bigl(2|w_n|^{2^*_\alpha-1} + A\bigr)\,dxdt\leq C,
\end{equation}
and then
\[
\frac{1}{r^{N+1-2\alpha}}\int_{\partial \mathcal {B}_{r}^+(z)\cap\{y>0\}}y^{1-2\alpha}w_n\,dS\leq C.
\]

\end{proof}

Let us recall the Muckenhoupt class $A_p$ for $p>1$:

\[
A_p= \bigl\{ w:\; \sup_{\mathcal {B}} \Bigl(\frac1{|\mathcal {B}|}\int_{
\mathcal {B}} |w|\Bigr) \Bigl(\frac1{|\mathcal {B}|}
\int_{\mathcal {B}}|w|^{-\frac 1{p-1}}
\Bigr)^{p-1}\le C,\;\; \text{for all ball  $\mathcal {B}$ in $\mathbb R^{N+1}$}\bigr\}.
\]
It is easy to check that $y^{1-2\alpha}\in  A_2$.

Denote $\|u\|_{L^p(E,y^{1-2\alpha})} = (\int_E y^{1-2\alpha}|u|^p\,dx)^{\frac 1p}$.
We have the following result  \cite{FKS}:

\begin{Lemma}\label{lm:4.2}
Let $ \mathcal D $ be an open bounded set in $\mathbb{R}^{N+1}$.
 There exist constants  $\delta>0$  and
$C>0$ depending  only on $N$ and $\mathcal D$,    such that for all $u\in C_0^\infty(\mathcal D)$ and all $k$ satisfying $1\leq k\leq \frac N{N-1}+\delta$,
\begin{equation}\label{eq:4.1}
\|u\|_{L^{2k}(\mathcal D, y^{1-2\alpha})}\leq C\|\nabla u\|_{L^{2}(\mathcal D, y^{1-2\alpha})}.
\end{equation}

\end{Lemma}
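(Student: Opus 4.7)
The plan is to deduce Lemma~\ref{lm:4.2} from the weighted Sobolev inequality of Fabes--Kenig--Serapioni for Muckenhoupt $A_2$ weights as cited in \cite{FKS}, so the only thing to verify is that the weight $w(X) = y^{1-2\alpha}$, with $X = (x,y) \in \mathbb{R}^{N+1}$, belongs to $A_2(\mathbb{R}^{N+1})$. Once this is in hand, the statement of the lemma is an immediate specialization of a cited general fact.

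The first step is to check the Muckenhoupt condition. Since $w$ depends only on the last coordinate, I would reduce the $A_2$ test on balls in $\mathbb{R}^{N+1}$ to a one-dimensional computation for $|y|^{1-2\alpha}$ on $\mathbb{R}$. The classical criterion gives $|y|^a \in A_p(\mathbb{R})$ if and only if $-1 < a < p-1$; here $a = 1-2\alpha$ and $p = 2$, and the hypothesis $0<\alpha<1$ translates precisely into $-1 < 1-2\alpha < 1$. A slightly more hands-on verification consists of splitting into two cases: for a ball $\mathcal{B}_r(z)$ with $z=(z',y_0)$ and $y_0 \geq 2r$, the weight is essentially constant on the ball and the $A_2$ quantity is $O(1)$; for a ball with $y_0 < 2r$, one enlarges to a ball centered on $\{y=0\}$ and computes $\int_0^{3r}\!\!\int_{|x-z'|<3r} y^{\pm(1-2\alpha)}\,dx\,dy$ explicitly, which gives the uniform bound.

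Having verified $y^{1-2\alpha}\in A_2$, I would invoke the Fabes--Kenig--Serapioni weighted Sobolev embedding: for any $A_2$ weight $w$ on $\mathbb{R}^{N+1}$ and any bounded open set $\mathcal D$, there exist $\delta>0$ and $C>0$, depending only on $N$, the $A_2$ constant of $w$, and $\operatorname{diam}(\mathcal D)$, such that
\[
\|u\|_{L^{2k}(\mathcal D, w)} \leq C\|\nabla u\|_{L^{2}(\mathcal D, w)}
\]
for all $u\in C_0^\infty(\mathcal D)$ and all $1 \leq k \leq \frac{N}{N-1} + \delta$. Applying this with $w = y^{1-2\alpha}$ yields \eqref{eq:4.1}.

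There is no genuine obstacle: the proof is essentially an invocation once the weight is identified. The nontrivial content (the reverse H\"older self-improvement of integrability giving the $\delta$-gain in the Sobolev exponent beyond the Poincar\'e range) is the heart of \cite{FKS} and is used as a black box. The only small check to write out carefully is that the $A_2$ constant of $y^{1-2\alpha}$ is uniform in balls of $\mathbb{R}^{N+1}$, which is where the restriction $0<\alpha<1$ enters.
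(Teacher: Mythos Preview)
Your proposal is correct and matches the paper's approach exactly: the paper does not prove Lemma~\ref{lm:4.2} but simply notes that $y^{1-2\alpha}\in A_2$ (``It is easy to check'') and then cites \cite{FKS} for the weighted Sobolev inequality. Your write-up just makes the $A_2$ verification and the invocation of Fabes--Kenig--Serapioni a bit more explicit.
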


Let $D^*$ be an open set in $\mathbb{R}^N$.
Consider the following problem:

\begin{equation}\label{1-5-8}
\begin{cases}
div(y^{1-2\alpha} \nabla w)=0, & (x,y)\in \mathcal{C}_{D^*};\\
-y^{1-2\alpha}\frac{\partial w}{\partial y} = a(x) w, & x\in D^*,\; y=0,
\end{cases}
\end{equation}
where  $a(x)\geq 0$ and $a\in L^\infty_{loc}(\mathbb{R}^N)$.  We have
the following estimate:

\begin{Lemma}\label{lem:4.3}  Suppose that
$w$ is a solution of \eqref{1-5-8}.   If there is a small constant $\delta>0$ such that
\[
\int_{\mathcal {B}_1(z)\cap\{y=0\}}|a|^{\frac N{2\alpha}}\,dx\leq \delta,
\]
for any  $\mathcal {B}_1(z)\cap \{ y=0\}\subset
 D^*$, $z= (x,0)$, then for any $p\geq 1$, there is a constant $C = C(p)>0$ such that
\begin{equation}\label{eq:4.7}
\|w\|_{L^{p}(\mathcal {B}^+_{ 1/2}(z),y^{1-2\alpha})}\leq C\|w\|_{L^{1}(\mathcal {B}^+_{1}(z),y^{1-2\alpha})},
\end{equation}
and
\begin{equation}\label{eq:4.7a}
\bigg(\int_{\mathcal {B}^+_{r}(z)\cap \{y=0\}}w^{p}\,dx\bigg)^{\frac 1p}
\leq \frac C{(R-r)^{\frac\sigma\kappa}}\|w\|_{L^1(\mathcal {B}^+_{R}(z),y^{1-2\alpha})}
\end{equation}
for $p\geq 1,\, 0<\sigma\leq 1$ and $0<\kappa<1$.
\end{Lemma}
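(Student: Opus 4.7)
The plan is to establish \eqref{eq:4.7} and \eqref{eq:4.7a} by a Moser iteration adapted to the degenerate elliptic setting, using the weighted Sobolev inequality (Lemma~\ref{lm:4.2}) on the $(N{+}1)$-dimensional interior and a trace embedding to control the boundary contribution, whose smallness is supplied by the assumption on $\|a\|_{L^{N/2\alpha}}$.

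First I would replace $w$ by $w+\varepsilon$ (or first assume $w\ge 0$ and then split $w=w^+-w^-$ after noting the problem respects the comparison) so that positive powers of $w$ can be used as test functions. For $\gamma\ge 1$ and a nonnegative cutoff $\eta\in C_0^\infty(\mathcal{B}_R^+(z))$, I would take $\phi=w^{\gamma-1}\eta^2$ and insert it into the weak form of \eqref{1-5-8}. Integration by parts, combined with the boundary condition $-y^{1-2\alpha}\partial_y w=a(x)w$ at $y=0$, yields
\begin{equation*}
(\gamma-1)\int_{\mathcal{C}} y^{1-2\alpha} w^{\gamma-2}\eta^2|\nabla w|^2\,dX
+2\int_{\mathcal{C}} y^{1-2\alpha}w^{\gamma-1}\eta\,\nabla w\cdot\nabla\eta\,dX
=\int_{y=0} a\,w^{\gamma}\eta^2\,dx.
\end{equation*}
Applying Cauchy--Schwarz to the cross term and recalling that $|\nabla(w^{\gamma/2}\eta)|^2\le C(\gamma)\bigl(w^{\gamma-2}\eta^2|\nabla w|^2+w^\gamma|\nabla\eta|^2\bigr)$, I arrive at a Caccioppoli-type inequality
\begin{equation*}
\int_{\mathcal{C}} y^{1-2\alpha}|\nabla(w^{\gamma/2}\eta)|^2\,dX
\le C(\gamma)\int_{\mathcal{C}} y^{1-2\alpha}w^\gamma|\nabla\eta|^2\,dX
+C(\gamma)\int_{y=0} a\,w^\gamma\eta^2\,dx.
\end{equation*}

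To handle the boundary term I would use H\"older's inequality with exponents $N/(2\alpha)$ and $N/(N-2\alpha)$, giving
\begin{equation*}
\int_{y=0} a\,w^\gamma\eta^2\,dx
\le \|a\|_{L^{N/2\alpha}(\mathcal{B}_1(z)\cap\{y=0\})}\,\Bigl(\int_{y=0}(w^{\gamma/2}\eta)^{2N/(N-2\alpha)}\,dx\Bigr)^{(N-2\alpha)/N},
\end{equation*}
and then invoke the trace embedding $H^1_{0,L}(\mathcal{C};y^{1-2\alpha})\hookrightarrow L^{2N/(N-2\alpha)}(\{y=0\})$ to bound the right-hand boundary factor by $\int y^{1-2\alpha}|\nabla(w^{\gamma/2}\eta)|^2$. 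Thanks to the smallness hypothesis $\|a\|_{L^{N/2\alpha}}\le\delta$, this boundary contribution is absorbed into the left-hand side, leaving
\begin{equation*}
\int_{\mathcal{C}} y^{1-2\alpha}|\nabla(w^{\gamma/2}\eta)|^2\,dX
\le C(\gamma)\int_{\mathcal{C}} y^{1-2\alpha}w^\gamma|\nabla\eta|^2\,dX.
\end{equation*}
Lemma~\ref{lm:4.2} then upgrades this to the reverse-H\"older relation $\|w\|_{L^{k\gamma}(\mathcal{B}_r^+,y^{1-2\alpha})}\le [C(R-r)^{-2}]^{1/\gamma}\|w\|_{L^{\gamma}(\mathcal{B}_R^+,y^{1-2\alpha})}$ for $k\in(1,N/(N-1)+\delta]$.

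Standard Moser iteration, taking $\gamma_j=k^j$ and a sequence of shrinking radii $r_j\searrow 1/2$ with $r_j-r_{j+1}\sim 2^{-j}$, then produces \eqref{eq:4.7} for every $p\ge 1$, starting from $\gamma=1$ and iterating finitely many times (or infinitely many, with the usual geometric-series control on the constants). For the boundary estimate \eqref{eq:4.7a}, at each iteration step I would combine the Caccioppoli bound with the trace inequality to convert the interior $L^{k\gamma}$ control into a boundary $L^{\gamma\cdot 2^*_\alpha/2}$ control, so that iterating yields arbitrary $L^p$ boundary norms; the $(R-r)^{-\sigma/\kappa}$ factor is the accumulated constant from the cutoff gradients (with $\kappa=k-1$ controlling the geometric rate and $\sigma$ the number of steps needed to exceed $p$). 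The main technical obstacle is the careful bookkeeping of constants across the iteration, together with the simultaneous absorption of the boundary term: one must show that the single smallness condition $\|a\|_{L^{N/2\alpha}}\le\delta$ suffices at every iteration level, which relies crucially on the scale-invariance of the Hölder/trace pairing in terms of the $L^{N/2\alpha}$ norm.
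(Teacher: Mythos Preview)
Your approach is essentially the same as the paper's---Moser iteration with the weighted Sobolev inequality of Lemma~\ref{lm:4.2}, absorbing the boundary term via H\"older and the trace embedding using the smallness of $\|a\|_{L^{N/2\alpha}}$. However, there is a gap in the step where you claim to start the iteration at $\gamma=1$. In your own Caccioppoli display the gradient term carries the factor $(\gamma-1)$, so at $\gamma=1$ you obtain no control on $\int y^{1-2\alpha}|\nabla(w^{1/2}\eta)|^2$ and the reverse-H\"older relation $\|w\|_{L^{k\gamma}}\le C(R-r)^{-2/\gamma}\|w\|_{L^\gamma}$ is not available for $\gamma=1$. Thus the iteration cannot be launched from the $L^1$ norm directly.

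The paper repairs this in a standard way: it runs the iteration only for exponents $2q\ge 2$ (equivalently your $\gamma\ge 2$), obtaining
\[
\|w\|_{L^p(\mathcal B_r^+,\,y^{1-2\alpha})}\le \frac{C}{(R-r)^\sigma}\,\|w\|_{L^{2^*_\alpha}(\mathcal B_R^+,\,y^{1-2\alpha})}
\]
for $p>2^*_\alpha$, then interpolates $\|w\|_{L^{2^*_\alpha}}\le \|w\|_{L^1}^{\kappa}\|w\|_{L^p}^{1-\kappa}$ and applies Young's inequality to reach
\[
\|w\|_{L^p(\mathcal B_r^+)}\le \tfrac12\|w\|_{L^p(\mathcal B_R^+)}+\frac{C}{(R-r)^{\sigma/\kappa}}\|w\|_{L^1(\mathcal B_R^+)},
\]
after which the usual iteration-in-radii lemma absorbs the first term and yields \eqref{eq:4.7}. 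The boundary estimate \eqref{eq:4.7a} then follows by combining the Caccioppoli bound \eqref{eq:4.10} with the trace inequality and \eqref{eq:4.7}. If you insert this interpolation-and-absorption step in place of ``start at $\gamma=1$'', your argument coincides with the paper's.
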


\begin{proof}
We only need to prove the result for $p>2^*_\alpha$.
Let $1\geq R>r>0$. Define $\xi\in C_0^2(\mathcal B_R(z))$, with $\xi=1$ in $\mathcal B_r(z)$, $0\leq
\xi\leq 1$, and $|\nabla\xi|\leq \frac 2{R-r}$. Let $q=\frac p{2^*_\alpha}$ and $R$ be small so that
$\varphi =\xi^2 w^{2q-1}\in H_{0,L}^1(\mathcal{C}_{D^*})$.  We have
\[
\int_{\mathcal{C}_{D^*}}y^{1-2\alpha}\nabla w \nabla\varphi\,dxdy=
 \int_{D^* \cap \{y=0\}} aw\varphi\,dx,
\]
and
\[
\begin{split}
&\int_{\mathcal{C}_{D^*}}y^{1-2\alpha}\nabla w \nabla\varphi\,dxdy\\
&\geq \frac{2q-1}{2q^2}\int_{\mathcal{C}_{D^*}}y^{1-2\alpha}|\nabla(\xi w^q)|^2\,dxdy - \frac C{(R-r)^2}\int_{\mathcal B^+_{R}(z)}y^{1-2\alpha}w^{2q}\,dx.\\
\end{split}
\]
Hence,
\begin{equation}\label{eq:4.8}
\begin{split}
&\int_{\mathcal{C}_{D^*}}y^{1-2\alpha}|\nabla(\xi w^q)|^2\,dxdy\\
\leq &\frac C{(R-r)^2}\int_{\mathcal B^+_{R}(z)}y^{1-2\alpha}w^{2q}\,dx + \int_{
D^*\times\{0\}} aw\varphi\,dx\\
\leq &\frac C{(R-r)^2}\int_{\mathcal B^+_{R}(z)}y^{1-2\alpha}w^{2q}\,dx \\
&+\bigg(\int_{ \mathcal B^+_{R}(z)\cap \{y=0\}}|a|^{\frac N{2\alpha}}\,dx\bigg)^{\frac {2\alpha}N}\bigg(\int_{D^* \times\{0\}}(\xi w^q)^{2^*_\alpha}\,dx\bigg)^{\frac{N-2\alpha}N}\\
\leq &\frac C{(R-r)^2}\int_{\mathcal B^+_{R}(z)}y^{1-2\alpha}w^{2q}\,dx +\delta^{\frac {2\alpha}N}\bigg(\int_{D^* \times\{0\}}(\xi w^q)^{2^*_\alpha }\,dx\bigg)^{\frac{N-2\alpha}N}.\\
\end{split}
\end{equation}

By the trace inequality, we obtain

\begin{equation}\label{eq:4.9}
\begin{split}
&\int_{\mathcal{C}_{D^*}}y^{1-2\alpha}|\nabla(\xi w^q)|^2\,dxdy
\\
\leq &\frac C{(R-r)^2}\int_{\mathcal B^+_{R}(z)}y^{1-2\alpha}w^{2q}\,dxdy
+C\delta^{\frac {2\alpha}N}
\int_{\mathcal{C}_{D^*}}y^{1-2\alpha}|\nabla(\xi w^q)|^2\,dxdy.
\end{split}
\end{equation}
So,  if $\delta>0$ is small, we obtain

\begin{equation}\label{eq:4.10}
\int_{\mathcal{C}_{D^*}}y^{1-2\alpha}|\nabla(\xi w^q)|^2\,dxdy
\leq \frac C{(R-r)^2}\int_{\mathcal B^+_{R}(z)}y^{1-2\alpha}w^{2q}\,dxdy
\end{equation}
for $0<r<R<1$. By Lemma \ref{lm:4.2},

\begin{equation}\label{10-5-8}
\bigg(\int_{\mathcal B^+_{R}(z)}y^{1-2\alpha}(\xi w^{q})^{2t}\,dx\bigg)^{\frac1t}\le
C\int_{\mathcal{B}^+_{R}(z)}y^{1-2\alpha}|\nabla(\xi w^q)|^2\,dxdy
\end{equation}
for some $t>1$. As a result,  we obtain from \eqref{eq:4.10}  and \eqref{10-5-8},

\begin{equation}\label{eq:4.11}
\bigg(\int_{\mathcal B^+_{r}(z)}y^{1-2\alpha}w^{2tq}\,dx\bigg)^{\frac1t}
\leq \frac C{(R-r)^2}\int_{\mathcal B^+_{R}(z)}y^{1-2\alpha}w^{2q}\,dxdy,
\end{equation}
which yields

\begin{equation}\label{eq:4.12}
\bigg(\int_{\mathcal B^+_{r}(z)}y^{1-2\alpha}w^{2tq}\,dx\bigg)^{\frac1{2tq}}
\leq \frac C{(R-r)^{\frac 1q}}\bigg(\int_{\mathcal B^+_{R}(z)}y^{1-2\alpha}w^{2q}\,dxdy\bigg)^{\frac 1{2q}}
\end{equation}
for $0<r<R<1$. Note that if $p>q\geq 1$, by H\"{o}lder's inequality,

\begin{equation}\label{eq:4.13}
\bigg(\int_{\mathcal B^+_{R}(z)}y^{1-2\alpha}w^{q}\,dx\bigg)^{\frac1{q}}
\leq C\bigg(\int_{\mathcal B^+_{R}(z)}y^{1-2\alpha}w^{p}\,dxdy\bigg)^{\frac 1{p}}.
\end{equation}

Using \eqref{eq:4.13} and iterating \eqref{eq:4.12} we obtain that there is $\sigma>0$ such that
\begin{equation}\label{eq:4.14}
\bigg(\int_{\mathcal B^+_{r}(z)}y^{1-2\alpha}w^{p}\,dx\bigg)^{\frac1p}
\leq \frac C{(R-r)^\sigma}\bigg(\int_{\mathcal B^+_{R}(z)}y^{1-2\alpha}w^{2^*_\alpha}\,dxdy\bigg)^{\frac 1{2^*_\alpha}}
\end{equation}
for $p>2^*_\alpha$ and $0<r<R<1$. By H\"{o}lder's inequality,

\[
\bigg(\int_{\mathcal B^+_{R}(z)}y^{1-2\alpha}w^{2^*_\alpha}\,dxdy\bigg)^{\frac 1{2^*_\alpha}}\leq \|w\|^{\kappa}_{L^1(\mathcal B^+_{R}(z),y^{1-2\alpha})}\|w\|^{1-\kappa}_{L^p(\mathcal B^+_{R}(z),y^{1-2\alpha})}.
\]
Hence,
\[
\|w\|_{L^p(\mathcal B^+_{r}(z),y^{1-2\alpha})}\leq \frac 12\|w\|_{L^p(\mathcal B^+_{R}(z),y^{1-2\alpha})} + \frac C{(R-r)^{\frac\sigma\kappa}}\|w\|_{L^1(\mathcal B^+_{R}(z),y^{1-2\alpha})}.
\]
By iteration, we obtain

\begin{equation}\label{20-5-8}
\|w\|_{L^p(\mathcal B^+_{r}(z),y^{1-2\alpha})}\leq  \frac C{(R-r)^{\frac\sigma\kappa}}\|w\|_{L^1(\mathcal B^+_{R}(z),y^{1-2\alpha})}
\end{equation}
for $p>2^*_\alpha$ and $0<r<R<1$.

Finally, \eqref{eq:4.10}, \eqref{20-5-8} and the trace inequality  imply that
\begin{equation}\label{eq:4.15}
\bigg(\int_{\mathcal B^+_{r}(z)\cap \{y=0\}}w^{p}\,dx\bigg)^{\frac 1p}
\leq \frac C{(R-r)^{\frac\sigma\kappa}}\|w\|_{L^1(\mathcal B^+_{R}(z),y^{1-2\alpha})}.
\end{equation}

\end{proof}

\begin{proof}[Proof of Proposition \ref{prop:4.1}]

It follows from Lemma~\ref{lem:4.1}
\[
\frac{1}{r^{N+1-2\alpha}}\int_{\partial
\mathcal  B^+_{r}((x_n,0))}y^{1-2\alpha}w_n\,dS\leq C,
\]
which gives

\[
\int_{\mathcal A_n^2}y^{1-2\alpha}w_n\,dX\leq C\int_{(C+1)\sigma_n^{
-\frac12}}^{(C+4)\sigma_n^{
-\frac12}}
r^{N+1-2\alpha}\,dr\leq C\sigma_n^{-\frac {N+2-2\alpha}2}.
\]
In particular

\begin{equation}\label{10-1-9}
\int_{\mathcal  B^+_{\sigma_n^{-\frac 12}}(z)}y^{1-2\alpha}w_n\,dX\leq C\sigma_n^{-\frac {N+2-2\alpha}2},\quad \forall\; z\in \mathcal {A}_n^2.
\end{equation}

Let
\[
\tilde v_n(X)= v_n(\sigma_n^{-\frac12} X),\quad X=(x,y)\in\mathcal{C}_{\Omega_n},
\]
where $\Omega_n=\{x: \sigma_n^{-\frac12} x\in\Omega\}$. Then $\tilde v_n$ satisfies
\[
\begin{cases}
div(y^{1-2\alpha}\nabla \tilde v_n) = 0,& \text{in}\; \mathcal{C}_{\Omega_n},\\
\tilde v_n=0,& \text{on}\; \partial_L\mathcal{C}_{\Omega_n},\\
y^{1-2\alpha}\frac{\partial\tilde v_n}{\partial \nu} = \sigma_n^{-\alpha}(|\tilde v_n(x,0)|^{p_n-2}\tilde v_n(x,0) + \lambda\tilde v_n(x,0)), & \text{on}\; \Omega_n\times\{0\}.
\end{cases}
\]

Let $\xi=\sigma_n^{\frac 12} z$.  Since $B^+_{\sigma_n^{-\frac12}}(z)$, $z\in
\mathcal{A}_n^2$,  does not contain any concentration point of $v_n$, we can
deduce

\[
\begin{split}
&\int_{\mathcal B_1(\xi)\cap\{y=0\}}|\sigma_n^{-\alpha}\bigl(
|\tilde v_n(x,0)|^{p_n-2}+\lambda\bigr)|^{\frac N{2\alpha}}\,dx\\
\leq &C\int_{\mathcal B_1(\xi)\cap\{y=0\}}|\sigma_n^{-\alpha}\bigl( |\tilde v_n(x,0)|^{2_\alpha^*-2}+1\bigr)|^{\frac N{2\alpha}}\,dx\\
\leq  & C \int_{\mathcal B_{
\sigma_n^{-\frac 12}}(z)\cap\{y=0\}}|v_n|^{2^*_\alpha}\,dx+C\sigma_n^{-\frac N{2}}\to 0\\
\end{split}
\]
as $n\to\infty$. Thus, by Lemmas \ref{lem:4.1}  and \ref{lem:4.3}, noting $|v_n|\leq w_n$, we
obtain
\[
\begin{split}
&\|\tilde v_n\|_{L^p(\mathcal B^+_{\frac 12}(\xi),y^{1-2\alpha})}
\leq C\int_{\mathcal B^+_1(\xi)}y^{1-2\alpha}|\tilde v_n|\,dxdy\\
\leq & C{\sigma_n^{\frac {N+2-2\alpha}2}} \int_{\mathcal B^+_{ \sigma_n^{-\frac12}}(z)}y^{1-2\alpha}|w_n|\,dxdy\\
\leq&  C{\sigma_n^{\frac {N+2-2\alpha}2}}\int_{\mathcal{A}_n^2}y^{1-2\alpha}|w_n|\,dxdy\\
\leq &  C{\sigma_n^{\frac {N+2-2\alpha}2}} {\sigma_n^{-\frac {N+2-2\alpha}2}}\leq C.
\end{split}
\]
By \eqref{eq:4.15}, we also have
\[
\bigg(\int_{\mathcal B^+_{\frac 12}(\xi)\cap \{y=0\}}|\tilde v_n|^{p}\,dx\bigg)^{\frac 1p}
\leq C\|\tilde v_n\|_{L^1(\mathcal B^+_{1}(\xi),y^{1-2\alpha})}\leq C.
\]
As a result,
\[
\sigma_n^{\frac{N+2-2\alpha}{2p}} \Bigl(\int_{\mathcal B_{\frac 12
\sigma_n^{-\frac12}}(z)}y^{1-2\alpha}|v_n|^{p}\,dxdy\Bigr)^{\frac 1p}\leq C,\quad \forall\; z\in \mathcal{A}_n^2.
\]
Thus,
\[
\int_{\mathcal{A}_n^2}y^{1-2\alpha}|v_n|^p\leq C\sigma_n^{-\frac{N+2-2\alpha}{2}}.
\]
Similarly,
\[
\int_{\mathcal{A}_n^2\cap\{y=0\}}|v_n|^p\leq C\sigma_n^{-\frac{N}{2}}.
\]

\end{proof}

\begin{Proposition}\label{prop:4.2}
We have

\begin{equation}\label{eq:4.19}
\begin{split}
&\int_{\mathcal{A}_n^3}y^{1-2\alpha}|\nabla v_n|^2\,dxdy\\
\le & C\sigma_n\int_{\mathcal{A}_n^2}y^{1-2\alpha}|w_n|^{2}\,dxdy+
C\int_{\mathcal{A}_n^2\times\{y=0\}}|w_n|^{2^*_\alpha}\,dx +C\int_{\mathcal{A}_n^2\times\{y=0\}}|w_n|^{2}\,dx.
\end{split}
\end{equation}
In particular,

\begin{equation}\label{2ln1}
\int_{\mathcal{A}_n^3}y^{1-2\alpha}|\nabla v_n|^2\,dxdy\le C\sigma_n^{-\frac{N-2\alpha}2}.
\end{equation}

\end{Proposition}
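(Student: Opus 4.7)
The estimate \eqref{eq:4.19} is a weighted Caccioppoli-type bound, so the plan is to test the equation $\mathrm{div}(y^{1-2\alpha}\nabla v_n)=0$ against $v_n\eta^2$ for an appropriate cut-off $\eta$, and to pick up the trace contribution on $\Omega\times\{0\}$ from the Neumann-type boundary condition in \eqref{eq:1.5}. I will choose $\eta\in C_0^\infty(\mathbb{R}^{N+1})$ with $\eta\equiv 1$ on $\mathcal{A}_n^3$, $\mathrm{supp}\,\eta\subset\mathcal{A}_n^2$, $0\le\eta\le 1$ and $|\nabla\eta|\le C\sigma_n^{1/2}$. The scale $\sigma_n^{1/2}$ in the gradient bound reflects the fact that the radial thickness of $\mathcal{A}_n^2\setminus\mathcal{A}_n^3$ is of order $\sigma_n^{-1/2}$, and keeping track of this scale correctly is the only place where the final exponent $-(N-2\alpha)/2$ is actually produced.

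Carrying out the integration by parts over $\mathcal{C}_\Omega$ (the lateral boundary term drops out since $v_n|_{\partial_L\mathcal{C}_\Omega}=0$, and $\eta$ is anyway compactly supported inside $\mathcal{A}_n^2$) and using the condition $y^{1-2\alpha}\partial_y v_n=-|v_n|^{p_n-2}v_n-\lambda v_n$ on $y=0$, I will obtain
\begin{equation*}
\int_{\mathcal{C}_\Omega} y^{1-2\alpha}|\nabla v_n|^2\eta^2\,dxdy + 2\int_{\mathcal{C}_\Omega} y^{1-2\alpha}\eta v_n\,\nabla v_n\cdot\nabla\eta\,dxdy = \int_{\Omega\times\{0\}}\bigl(|v_n|^{p_n}+\lambda v_n^2\bigr)\eta^2\,dx.
\end{equation*}
I will then absorb the cross term by Young's inequality, use $|\nabla\eta|^2\le C\sigma_n$, the pointwise comparison $|v_n|\le w_n$ established in Section~3, and the elementary bound $w_n^{p_n}\le w_n^2+w_n^{2^*_\alpha}$ (valid since $p_n\in[2,2^*_\alpha]$, so $w_n^{p_n}\le w_n^2$ where $w_n\le 1$ and $w_n^{p_n}\le w_n^{2^*_\alpha}$ where $w_n>1$) to arrive at \eqref{eq:4.19}.

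For \eqref{2ln1} the plan is just to plug Proposition~\ref{prop:4.1} into the right-hand side of \eqref{eq:4.19}: taking $p=2$ in \eqref{eq:4.2} bounds $\sigma_n\int_{\mathcal{A}_n^2}y^{1-2\alpha}w_n^2\,dxdy$ by $C\sigma_n\cdot\sigma_n^{-(N+2-2\alpha)/2}=C\sigma_n^{-(N-2\alpha)/2}$, while \eqref{eq:4.2a} applied with $p=2$ and $p=2^*_\alpha$ bounds the two boundary terms by $C\sigma_n^{-N/2}$, which is strictly smaller than $\sigma_n^{-(N-2\alpha)/2}$ since $\alpha>0$ and $\sigma_n\to\infty$. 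No step is really delicate; the only potential obstacle is matching the scale of the cut-off so that $|\nabla\eta|^2\sim\sigma_n$ combines cleanly with the $L^2$ bound from Proposition~\ref{prop:4.1} to produce the precise power $-(N-2\alpha)/2$ claimed in \eqref{2ln1}.
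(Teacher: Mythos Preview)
Your argument is correct and is essentially identical to the paper's: the paper also takes a cut-off $\varphi_n\in C_0^2(\mathcal{A}_n^2)$ with $\varphi_n=1$ on $\mathcal{A}_n^3$ and $|\nabla\varphi_n|\le C\sigma_n^{1/2}$, tests the equation against $\varphi_n^2 v_n$, bounds the boundary term via $|v_n|\le w_n$, and then invokes Proposition~\ref{prop:4.1} to obtain \eqref{2ln1}. The only cosmetic difference is that the paper writes the nonlinear exponent as $2^*_\alpha-2$ rather than $p_n-2$ in the boundary identity and skips the explicit Young/absorption step you spell out.
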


\begin{proof}

Let $\varphi_n\in C_0^2(\mathcal{A}_n^2)$ be a function with $\varphi_n=1$ in
$\mathcal{A}_n^3$; $0\le \varphi_n\le 1$ and $|\nabla\varphi_n|\le C\sigma_n^{\frac12}$.
From
\[
\begin{split}
&\int_{\mathcal{C}_\Omega}y^{1-2\alpha}\nabla v_n \nabla(\varphi^2_n v_n)\,dxdy=
\int_{\Omega\times\{y=0\}}\bigl(
|v_n|^{2^*_\alpha-2}+\lambda\bigr) v_n\varphi_n^2 v_n\,dx
\\
\le & C\int_{\Omega\times\{y=0\}}\bigl(
|w_n|^{2^*_\alpha}+ w_n^2\bigr)\varphi_n^2 \,dx,
\end{split}
\]
we can prove \eqref{eq:4.19}.

On the other hand, it follows from Proposition~\ref{prop:4.1} that
\begin{equation}\label{1-9-2}
\begin{split}
&\sigma_n\int_{\mathcal{A}_n^2}y^{1-2\alpha}|w_n|^{2}\,dxdy+
\int_{\mathcal{A}_n^2\times\{y=0\}}|w_n|^{2^*_\alpha}\,dx +
\int_{\mathcal{A}_n^2\times\{y=0\}}|w_n|^{2}\,dx\\
\le &C\sigma_n\sigma_n^{-\frac {N+2-2\alpha}2}+  C\sigma_n^{-\frac{N}{2}}
\le C\sigma_n^{-\frac{N-2\alpha}2}.
\end{split}
\end{equation}
It yields from \eqref{eq:4.19} that
\[
\int_{\mathcal A_n^3}y^{1-2\alpha}|\nabla v_n|^2\,dxdy\le C\sigma_n^{-\frac{N-2
\alpha}2}.
\]

\end{proof}

\section {Existence of infinitely many bound state solutions}

\setcounter{equation}{0}

Firstly, we have the following local Pohozaev identity.
\begin{Lemma}\label{lem:5.1}
Let $v$ be a solution of (\ref{eq:1.3}). Then for any smooth subset $
\mathcal M\subset \mathcal{C}_\Omega$, $v$ satisfies
\begin{equation}\label{eq:5.1}
\begin{split}
&\frac {N-2\alpha}2\int_{\partial \mathcal M}y^{1-2\alpha}v\frac{\partial v}{\partial\nu}\,dS\\
&=\frac 12 \int_{\partial \mathcal M}y^{1-2\alpha}|\nabla v|^2(X-z_0,\nu)\,dS - \int_{\partial \mathcal M}y^{1-2\alpha}(\nabla v, X-z_0)\frac{\partial v}{\partial\nu}\,dS,\\
\end{split}
\end{equation}
where $\nu $ is the outward normal to $\partial S$,  and  $z_0
\in \mathbb {R}^{N+1}$.
\end{Lemma}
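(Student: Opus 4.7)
The plan is the standard Pohozaev identity computation adapted to the degenerate equation $\operatorname{div}(y^{1-2\alpha}\nabla v)=0$. I would multiply the equation by the Pohozaev test function
\[
\varphi := (X-z_0)\cdot\nabla v + \frac{N-2\alpha}{2}\,v,
\]
integrate over $\mathcal{M}$, and then integrate by parts twice so that every interior contribution cancels and only the three boundary terms of \eqref{eq:5.1} remain.

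A single application of the divergence theorem to $\int_{\mathcal{M}}\varphi\,\operatorname{div}(y^{1-2\alpha}\nabla v)\,dX=0$ yields
\[
\int_{\partial\mathcal{M}} y^{1-2\alpha}\frac{\partial v}{\partial\nu}\,\varphi\,dS \;=\; \int_{\mathcal{M}} y^{1-2\alpha}\,\nabla v\cdot\nabla\varphi\,dX,
\]
whose left-hand side already supplies the two boundary pieces $\int_{\partial\mathcal{M}} y^{1-2\alpha}(\nabla v,X-z_0)\frac{\partial v}{\partial \nu}\,dS$ and $\frac{N-2\alpha}{2}\int_{\partial\mathcal{M}} y^{1-2\alpha}v\,\frac{\partial v}{\partial \nu}\,dS$ appearing in \eqref{eq:5.1}. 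Expanding $\nabla\varphi$ produces three interior integrands: $|\nabla v|^2$, $\tfrac12(X-z_0)\cdot\nabla|\nabla v|^2$, and $\tfrac{N-2\alpha}{2}|\nabla v|^2$.

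The decisive step is a second integration by parts on the middle piece. Using the identity $\operatorname{div}(y^{1-2\alpha}(X-z_0))=(N+2-2\alpha)y^{1-2\alpha}$ (which is why $z_0$ is tacitly taken on $\{y=0\}$, so that the $y$-derivative contributes $(1-2\alpha)y^{1-2\alpha}$ rather than a singular term), one obtains the third Pohozaev boundary term $\tfrac12\int_{\partial\mathcal{M}} y^{1-2\alpha}|\nabla v|^2(X-z_0,\nu)\,dS$ together with an interior piece $-\tfrac{N+2-2\alpha}{2}\int_{\mathcal{M}} y^{1-2\alpha}|\nabla v|^2\,dX$. Collecting, the total coefficient of the interior integral $\int_{\mathcal{M}} y^{1-2\alpha}|\nabla v|^2\,dX$ is
\[
1 + \frac{N-2\alpha}{2} - \frac{N+2-2\alpha}{2} \;=\; 0,
\]
so every bulk contribution cancels and rearranging the three surface pieces produces \eqref{eq:5.1}.

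The only delicate point I anticipate is justifying the two integrations by parts uniformly up to $\partial\mathcal{M}\cap\{y=0\}$, where the weight $y^{1-2\alpha}$ degenerates or blows up. I would handle this in the standard way by restricting to $\mathcal{M}\cap\{y>\delta\}$, running the calculation there, and passing to the limit $\delta\to 0^{+}$ using the local regularity of $v$ near $\{y=0\}$ together with the conormal trace $y^{1-2\alpha}\frac{\partial v}{\partial\nu}\to |v|^{2_\alpha^{\ast}-2}v+\lambda v$ furnished by \eqref{eq:1.3}; the extra weight $y^{1-2\alpha}$ is exactly the one that makes both extension-type integrals finite, so no further cancellation with the nonlinearity enters (5.1).
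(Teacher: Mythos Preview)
Your derivation is correct and is exactly the standard Pohozaev--Rellich computation one would expect; the paper itself states Lemma~5.1 without proof, so there is nothing to compare against. Your observation that the divergence identity $\operatorname{div}\bigl(y^{1-2\alpha}(X-z_0)\bigr)=(N+2-2\alpha)y^{1-2\alpha}$ requires the $y$-component of $z_0$ to vanish is well taken: the identity \eqref{eq:5.1} as written is in fact only valid for $z_0\in\mathbb{R}^{N}\times\{0\}$ (otherwise an extra bulk term $\tfrac{1-2\alpha}{2}\,z_0^{N+1}\int_{\mathcal{M}}y^{-2\alpha}|\nabla v|^2$ survives), and this is precisely how the lemma is applied in the proof of Theorem~\ref{thm:1.1}, where $z_0=(x_0,0)$.
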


{\bf Proof of Theorem \ref{thm:1.1}} We argue by contradiction. Suppose the assertion is not true.
Choose $t_n\in [\bar C+2,\bar C + 3]$ so that
\begin{equation}\label{eq:5.2}
\begin{split}
&\int_{\bigl(\partial \mathcal B_{t_n\sigma_n^{-\frac 12}}((x_n,0))\bigr)\cap \mathcal C_\Omega}
y^{1-2\alpha}\big(\sigma_n^{-\frac 12} |\nabla v_n|^2 +
\sigma_n^{\frac12}v_n^2\big)\,dS\\
&+\sigma_n^{\alpha-\frac 12}\int_{\bigl(\partial \mathcal B_{t_n\sigma_n^{-\frac 12}}((x_n,0))\bigr)\cap ( \Omega
\times \{0\})}
\bigl( | v_n|^{2^*_\alpha} +  v_n^2\bigr)\,dS
\\
&\leq \int_{\mathcal{A}_n^3}y^{1-2\alpha}\big(|\nabla v_n|^2 + \sigma_n v_n^2
\big)\,dxdy+\sigma_n^\alpha
\int_{\mathcal{A}_n^3\cap\{y=0\}}\bigl( | v_n|^{2^*_\alpha} +  v_n^2\bigr)\,dx,\\
\end{split}
\end{equation}
By Propositions \ref{prop:4.1} and \ref{prop:4.2},
\begin{equation}\label{eq:5.3}
\begin{split}
&\int_{\bigl(\partial \mathcal B_{t_n\sigma_n^{-\frac 12}}((x_n,0))\bigr)\cap \mathcal C_\Omega}
y^{1-2\alpha}\big(\sigma_n^{-\frac 12} |\nabla v_n|^2 +
\sigma_n^{\frac12}v_n^2\big)\,dS\\
&+\sigma_n^{\alpha-\frac 12}\int_{\bigl(\partial \mathcal B_{t_n\sigma_n^{-\frac 12}}((x_n,0))\bigr)\cap ( \Omega
\times \{0\})}
\bigl( | v_n|^{2^*_\alpha} +  v_n^2\bigr)\,dS
\\
&\leq
C\sigma_n^{-\frac{N-2\alpha}2} + C\sigma_n^\alpha
\sigma_n^{-\frac{N}2}=C'\sigma_n^{-\frac{N-2\alpha}2}.
\end{split}
\end{equation}

Let $p_n = 2_\alpha^* - \varepsilon_n$. Applying Lemma \ref{lem:5.1} to $v_n$ on $
\mathcal B_n = \mathcal B_{t_n\sigma_n^{-\frac 12}}((x_n,0))\cap \mathcal C_\Omega
\subset \mathbb{R}^{N+1}$  and  $z_0=(x_0,0)$, we obtain
\begin{equation}\label{eq:5.4}
\begin{split}
&\frac {N-2\alpha}2\int_{\partial B_n}y^{1-2\alpha}v_n\frac{\partial v_n}{\partial\nu
}\,dS\\
&=\frac 12 \int_{\partial B_n}y^{1-2\alpha}|\nabla v_n|^2(X-z_0,\nu)\,dS - \int_{\partial B_n}y^{1-2\alpha}(\nabla v_n, X-z_0)\frac{\partial v_n}{\partial\nu}\,dS.\\
\end{split}
\end{equation}

From the fact that

\[
y^{1-2\alpha}v_n\frac{\partial v_n}{\partial\nu}= |v_n(x,0)|^{p_n-2} v_n(x,0)+
\lambda v_n(x,0),\quad \text{on}\; y=0,
\]
 we obtain from \eqref{eq:5.4}
\begin{equation}\label{eq:5.5}
\begin{split}
&\frac {N-2\alpha}2\int_{\mathcal B_n\cap\{y=0\}}( |v_n|^{p_n}
+\lambda v_n^2)\,dx + \frac {N-2\alpha}2\int_{\partial \mathcal B_n\cap\{y>0\}}y^{1-2\alpha}v_n\frac{\partial v_n}{\partial\nu}\,dS\\
&= \frac 12 \int_{\mathcal B_n\cap\{y=0\}}y^{1-2\alpha}|\nabla v_n|^2(x-x_0,\nu)\,dx + \frac 12 \int_{\partial \mathcal B_n\cap\{y>0\}}y^{1-2\alpha}|\nabla v_n|^2(X-z_0,\nu)\,dS\\
&- \int_{\mathcal B_n\cap\{y=0\}}y^{1-2\alpha}(\nabla v_n, x-x_0)\frac{\partial v_n}{\partial\nu}\,dx - \int_{\partial \mathcal B_n\cap\{ y>0\}}y^{1-2\alpha}(\nabla v_n, X-z_0)\frac{\partial v_n}{\partial\nu}\,dS.\\
\end{split}
\end{equation}

Noting  that $x-x_0\perp \nu$ on $\mathcal B_n\cap\{y=0\}$,  we find
\[
\int_{\mathcal B_n\cap\{y=0\}}y^{1-2\alpha}|\nabla v_n|^2(x-x_0,\nu)\,dx = 0.
\]

On the other hand,
\begin{equation}\label{eq:5.6}
\begin{split}
&- \int_{\mathcal B_n\cap\{y=0\}}y^{1-2\alpha}(\nabla v_n, x-x_0)\frac{\partial v_n}{\partial\nu}\,dx\\
&= - \int_{\mathcal B_n\cap\{y=0\}}(\nabla_x v_n, x-x_0)(|v_n|^{p_n-2} v_n+ \lambda v_n)\,dx.\\
&= - \int_{\mathcal B_n\cap\{y=0\}}(\nabla_x(\frac 1{p_n} |v_n|^{p_n} +\frac12\lambda v_n^2), x-x_0)\,dx\\
&= N\int_{\mathcal B_n\cap\{y=0\}}(\frac 1{p_n} |v_n|^{p_n} +\frac12\lambda v_n^2)\,dx - \int_{\partial (\mathcal B_n\cap\{y=0\})}(\frac 1{p_n} |v_n|^{p_n} +\frac12\lambda v_n^2)\langle x-x_0,\nu_x\rangle\,dS.\\
\end{split}
\end{equation}
So equation \eqref{eq:5.5} becomes

\begin{equation}\label{eq:5.7}
\begin{split}
&\bigg(\frac N{p_n} - \frac {N-2\alpha}2\bigg)\int_{\mathcal B_n\cap\{y=0\}}|v_n|^{p_n}\,dx + \bigg(\frac N{2} - \frac {N-2\alpha}2 \bigg)
\lambda\int_{\mathcal B_n\cap\{y=0\}} v_n^{2}\,dx \\
&= \int_{\partial (\mathcal B_n\cap\{y=0\})}(\frac 1{p_n} |v_n|^{p_n} +\frac12\lambda v_n^2)\langle x-x_0,\nu_x\rangle\,dS\\
&+ \frac {N-2\alpha}2\int_{\partial \mathcal B_n\cap\{y>0\}}y^{1-2\alpha}v_n\frac{\partial v_n}{\partial\nu}\,dS\\
& -\frac 12 \int_{\partial \mathcal B_n\cap\{y>0\}}y^{1-2\alpha}|\nabla v_n|^2(X-z_0,\nu)\,dS\\
&+\int_{\partial \mathcal B_n\cap\{y>0\}}y^{1-2\alpha}(\nabla v_n, X-z_0)\frac{\partial v_n}{\partial\nu}\,dS.\\
\end{split}
\end{equation}

We decompose

\[
\partial\mathcal  B_n \cap \{y>0\}=  \partial_i
\mathcal B_n\cup \partial_e \mathcal B_n,
\]
where $\partial_i \mathcal B_n = \partial\mathcal  B_n \cap\mathcal C_\Omega$ and
$\partial_e \mathcal B_n = \mathcal  B_n \cap\partial_L \mathcal C_\Omega$.

Now, we have two cases:

\begin{itemize}
\item [(i)] $\mathcal B_{t_n\sigma_n^{-\frac 12}}((x_n,0))\cap\{y>0\}\cap \big(\mathbb{R}^{N+1}\setminus \mathcal C_\Omega\big)\not=\emptyset$,

\item [(ii)] $\mathcal B_{t_n\sigma_n^{-\frac 12}}((x_n,0))\cap\{y>0\}\subset \mathcal C_\Omega$.

\end{itemize}

In case $(i)$, we take $x_0\in \mathbb{R}^{N}\setminus \Omega$ with $|x_n - x_0|\leq 2t_n\sigma_n^{-\frac 12}$, $\nu \cdot(X-(x_0,0))\leq 0$ on
$\partial_e \mathcal  B_n$, where $\nu$ is the outward normal to $\partial_L
\mathcal C_\Omega$.  Since  $v_n=0 $ on $\partial_L
\mathcal C_\Omega$, we find

\begin{equation}\label{20-1-9}
\begin{split}
& -\frac 12 \int_{\partial_e \mathcal B_n}y^{1-2\alpha}|\nabla v_n|^2(X-z_0,\nu)\,dS\\
&+\int_{\partial_e \mathcal B_n}y^{1-2\alpha}(\nabla v_n, X-z_0)\frac{\partial v_n}{\partial\nu}\,dS.\\
=& \frac 12 \int_{\partial_e \mathcal B_n}y^{1-2\alpha}|\nabla v_n|^2(X-z_0,\nu)\,dS\le
0.
\end{split}
\end{equation}

In case $(ii)$,  $ \partial_e \mathcal B_n=\emptyset$.    We choose $x_0=x_n$.

Noting that  $ p_n\le 2^*_\alpha$  and
 $v_n=0 $ on $\partial_L
\mathcal C_\Omega$, we obtain from \eqref{eq:5.7}

\begin{equation}\label{21-1-9}
\begin{split}
& \bigg(\frac N{2} - \frac {N-2\alpha}2 \bigg)\lambda\int_{\mathcal B_n\cap\{y=0\}} v_n^{2}\,dx \\
&\le  \int_{(\partial_i\mathcal B_n)\cap\{y=0\}}(\frac 1{p_n} |v_n|^{p_n} +\frac12\lambda v_n^2)\langle x-x_0,\nu_x\rangle\,dS\\
&+ \frac {N-2\alpha}2\int_{\partial_i \mathcal B_n}y^{1-2\alpha}v_n\frac{\partial v_n}{\partial\nu}\,dS\\
& -\frac 12 \int_{\partial_i \mathcal B_n}y^{1-2\alpha}|\nabla v_n|^2(X-z_0,\nu)\,dS\\
&+\int_{\partial_i \mathcal B_n}y^{1-2\alpha}(\nabla v_n, X-z_0)\frac{\partial v_n}{\partial\nu}\,dS.\\
\end{split}
\end{equation}

By \eqref{eq:5.3}, we find

\begin{equation}\label{30-1-9}
\begin{split}
\text{ RHS of \eqref{21-1-9}}\le
& C\sigma_n^{-\frac12} \int_{(\partial_i\mathcal B_n)\cap\{y=0\}}(|v_n|^{p_n} + v_n^2)
\,dS\\
&+ C\Bigl(\int_{\partial_i \mathcal B_n}y^{1-2\alpha}|\nabla v_n|^2\,dS\Bigr)^{\frac12}\Bigl(\int_{\partial_i \mathcal B_n}y^{1-2\alpha}v_n^2\,dS\Bigr)^{\frac12}\\
&+
  C\sigma_n^{-\frac12} \int_{\partial_i \mathcal B_n}y^{1-2\alpha}|\nabla v_n|^2\,dS\\
&\leq
C\Bigl(\sigma_n^{-\frac12} \sigma_n^{\frac12-\alpha}+ \sigma_n^{\frac14}\sigma_n^{-\frac14} +\sigma_n^{-\frac12}\sigma_n^{\frac12}
\Bigr)\sigma_n^{-\frac{N-2\alpha}2}\le C\sigma_n^{-\frac{N-2\alpha}2}.
\end{split}
\end{equation}

Inserting \eqref{30-1-9} into \eqref{21-1-9}, we obtain

\begin{equation}\label{eq:5.8}
\int_{\mathcal B_n\cap\{y=0\}} v_n^{2}\,dx \leq C\sigma_n^{-\frac{N-2\alpha}2}.
\end{equation}

Let us assume that $\sigma_n= \sigma_{n,1}$.
 Using \eqref{19-30-8}, similarly
to \cite{CPY}, we can deduce that if $N>4\alpha$, then

\begin{equation}\label{1-2-9}
\begin{split}
&\int_{\mathcal B_n\cap\{y=0\}} v_n^{2}\,dx\ge
\int_{\mathcal B_{\sigma_n^{-1}}((x_n,0))\cap\{y=0\}} v_n^{2}\,dx \\
\ge & \frac12\int_{\mathcal B_{\sigma_n^{-1}}((x_n,0))\cap\{y=0\}} |\rho_{x_n^1,\sigma_n^1}(W_1)|^2
+o\bigl(\sigma_n^{-2\alpha}\bigr)\\
=& \frac12\sigma_n^{-2\alpha}\int_{\mathcal B_1(0)\cap\{y=0\}} W_1^2
+o\bigl(\sigma_n^{-2\alpha}\bigr).
\end{split}
\end{equation}

Combining \eqref{eq:5.8}  and \eqref{1-2-9},  we are led to
\[
\sigma_n^{-2\alpha}\leq C\sigma_n^{-\frac{N-2\alpha}2}.
\]
This is a contradiction if $N> 6\alpha$. $\Box$

{\bf Proof of Theorem \ref{thm:1.2}.}  It is
standard to prove that Theorem \ref{thm:1.2} follows directly from Theorem \ref{thm:1.1}.  See \cite{CDS,CPY}.
 For the convenience of the readers, we follow \cite{CPY} to outline the proof.

 For any $k\in \mathbb{N}$,  define the ${\mathbb{Z}}_2$-homotopy
class $\mathcal{F}_k$ by
\[
\mathcal{F}_k=\left\{A;\,\,A\in H^1_{0,L}(\mathcal{C}_\Omega)\, \text{is
compact},\mathbb{Z}_2-\text{invariant, and}\,\,\gamma(A)\geq
k\right\},
\]
where the genus $\gamma(A)$ is smallest integer $m$, such that there
exists an odd map $\phi\in C(A, \mathbb{R}^m\setminus \{0\})$. For $k=1,2,\cdots$,
 we can define the  minimax value
 \begin{equation}\label{cv}
c_{k,\,\varepsilon}=\inf_{A\in\mathcal{F}_k}  \max_{u\in A} I_{\varepsilon}(u),
\end{equation}
where  $I_{\varepsilon}(u)$ is defined in \eqref{eq:1.4}. Then, $c_{k,\varepsilon}$ is a critical value of  $I_{\varepsilon}(u)$,  Thus there is
 $u_{k, \varepsilon}$ such that
$I_{\varepsilon}(u_{k,\varepsilon})=c_{k,\varepsilon}$  and $I'_{\varepsilon}(u_{k,\varepsilon})=0$.

 For any $k=1, \cdots $,  it is easy to show that $|c_{k,\,\varepsilon}|\le C_k$ for some $C_k>0$ which
 is independent of $\varepsilon$. Therefore, $u_{k, \varepsilon}$ is bounded in $H^1_{0,L}(\mathcal{C}_\Omega)$ for any fixed $k$. By
Theorem \ref{thm:1.1}, up to a subsequence, $u_{k, \varepsilon}\to u_k$ strongly in $H^1_{0,L}(\mathcal{C}_\Omega)$. So, $u_k$ satisfies
$I_{0}(u_{k})=c_{k}:=\lim_{\varepsilon \to 0} c_{k,\,\varepsilon}$  and $I'_{0}(u_{k})=0$.

We are now ready to show that $I_0(u)$ has infinitely many critical
points.  Note that $c_k$ is non-decreasing in $k$.  We distinguish
several cases.

(1)  Suppose that there are $1< k_1 < \cdots k_{i} < \cdots$,
satisfying

\[
c_{k_1}<\cdots< c_{k_i}<\cdots.
\]
Then, we are done.  So we assume in the sequel that for some
positive integer $m$, $ c_k= c$ for all $k\ge m$.

(2)  Suppose that for any $\delta>0$, $I_0(u)$ has a critical point
$u$ with $I_0(u)\in (c-\delta,c+\delta)$ and $I_0(u)\ne c$.  In this
case, we are done. So from now on we  assume that there exists a
$\delta>0$, such that  $I_0(u)$ has no critical point $u$ with
$I_0(u)\in (c-\delta,\,c)\bigcup(c,\,c+\delta)$. In this case, using
the deformation argument,    we can
 prove that
\begin{equation}\label{gg1}
\gamma(K_c)\ge 2,
\end{equation}
where $K_c= \bigl\{ u\in H^1_{0,L}(\mathcal{C}_\Omega): I'_0(u)=0,\;\;
I_0(u)=c\bigr\}$. As a consequence, $I_0(u)$ has infinitely many
critical points.
$\Box$

\appendix

\section{ Estimates for a linear problem}

In this section,  we will establish the $L^p$ estimates for a linear
problem. Let $D$ be any bounded domain in  $\mathbb{R}^N$. Recall that we use
the notations $ \mathcal C_D = D\times (0, +\infty)$  and
 $\partial_L \mathcal C_D =\partial D\times (0, +\infty)$ Consider

\begin{equation}\label{eq:2.15}
\left\{ \arraycolsep=1.5pt
\begin{array}{lll}
div(y^{1-2\alpha}\nabla w)=0 \;\;   &{\rm in}\;  \mathcal C_D  , \\[2mm]
 w=0  & {\rm on}\quad \partial_L \mathcal C_D, \\[2mm]
 y^{1-2\alpha}\frac{\partial w}{\partial \nu}=f(x) & {\rm on}\quad
 D\times\{0\}.\\[2mm]
 \end{array}
 \right.
 \end{equation}

\begin{Proposition}\label{lem:2.2}

Suppose that $f\in C^\beta(D), f\geq 0$. Let  $w$ be the solution of \eqref{eq:2.15}. Then for any $1<p<\frac
N{2\alpha}$, there is a constant $C>0$, such that
$$
\|w(\cdot,0)\|_{L^{\frac {Np}{N-2\alpha p}}(D)}\leq C\|f\|_{L^p(D)}.
$$

\end{Proposition}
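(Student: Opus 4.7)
My approach is to identify $w(\cdot,0)$ pointwise with (a constant times) the Riesz potential of $f$, and then invoke the Hardy--Littlewood--Sobolev inequality. This reduces the bounded-domain estimate to a classical one on $\mathbb{R}^N$.

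\emph{Step 1: passage to the half-space.} Extend $f$ by zero to $\mathbb{R}^N\setminus D$ (still denoted $f$) and let $\tilde w$ be the solution of
\begin{equation*}
\mathrm{div}(y^{1-2\alpha}\nabla \tilde w)=0\ \text{in}\ \mathbb{R}^{N+1}_+,\qquad y^{1-2\alpha}\frac{\partial \tilde w}{\partial\nu}=f\ \text{on}\ \mathbb{R}^N\times\{0\}.
\end{equation*}
By the Caffarelli--Silvestre extension formula (cf.\ \cite{CS,CabS}), the trace is precisely the Riesz potential
\begin{equation*}
\tilde w(x,0)=c_{N,\alpha}\int_{\mathbb{R}^N}\frac{f(\xi)}{|x-\xi|^{N-2\alpha}}\,d\xi=c_{N,\alpha}\, I_{2\alpha}f(x),
\end{equation*}
and $\tilde w\ge 0$ since $f\ge 0$ and the Poisson kernel is positive.

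\emph{Step 2: comparison on $\mathcal C_D$.} The function $\psi:=\tilde w-w$ satisfies $\mathrm{div}(y^{1-2\alpha}\nabla\psi)=0$ in $\mathcal C_D$, with $y^{1-2\alpha}\partial_\nu\psi=0$ on $D\times\{0\}$ and $\psi=\tilde w\ge 0$ on $\partial_L\mathcal C_D$. Since $\psi^-\in H^1_{0,L}(\mathcal C_D)$, the weighted variational identity (legitimate because $y^{1-2\alpha}$ is an $A_2$ Muckenhoupt weight, as used already in Lemma~\ref{lm:4.2}) gives
\begin{equation*}
0=\int_{\mathcal C_D}y^{1-2\alpha}\nabla\psi\cdot\nabla\psi^-\,dxdy=-\int_{\mathcal C_D}y^{1-2\alpha}|\nabla\psi^-|^2\,dxdy,
\end{equation*}
so $\psi^-\equiv 0$ (using the lateral Dirichlet trace), i.e.\ $w\le\tilde w$ in $\mathcal C_D$. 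An identical test of the original equation against $w^-$ yields $w\ge 0$. Hence $0\le w(x,0)\le c_{N,\alpha}\,I_{2\alpha}f(x)$ on $D$.

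\emph{Step 3: Hardy--Littlewood--Sobolev.} For $1<p<N/(2\alpha)$ and $q=Np/(N-2\alpha p)$, the classical HLS inequality yields $\|I_{2\alpha}f\|_{L^q(\mathbb{R}^N)}\le C\|f\|_{L^p(\mathbb{R}^N)}$. Combining with the pointwise bound gives $\|w(\cdot,0)\|_{L^q(D)}\le C\|f\|_{L^p(D)}$, as desired.

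\textbf{Main obstacle.} The only delicate point is justifying the weighted comparison of Step~2 on the unbounded cylinder $\mathcal C_D$, since the putative competitor $\psi^-$ must lie in $H^1_{0,L}(\mathcal C_D)$ despite the infinite $y$--direction. This is handled by truncating to $D\times(0,M)$, inserting a cutoff in $y$, and passing to the limit using the decay of $\tilde w$ for $y\to\infty$ (which follows from its Poisson representation); alternatively one can verify directly that $\psi^-$ inherits the required weighted integrability from $w,\tilde w\in H^1_{\mathrm{loc}}(\mathbb{R}^{N+1}_+, y^{1-2\alpha})$ together with the lateral Dirichlet condition. Once this comparison is in place, the remainder of the argument is routine and classical.
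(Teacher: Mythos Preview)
Your proof is correct, but it follows a genuinely different route from the paper's.

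The paper argues entirely within the cylinder $\mathcal C_D$ by a Moser-type test: for $q>\tfrac12$ one multiplies \eqref{eq:2.15} by $w^{2q-1}$ (regularised as $w(w+\theta)^{2(q-1)}$ when $q<1$) and uses the trace Sobolev inequality to obtain
\[
\Bigl(\int_D w(x,0)^{q\,2^*_\alpha}\,dx\Bigr)^{2/2^*_\alpha}\le C\int_D f\,w^{2q-1}\,dx,
\]
then closes with H\"older on the right-hand side. Solving the resulting algebraic relation between $q$ and $p$ gives exactly $q\,2^*_\alpha=\tfrac{Np}{N-2\alpha p}$.

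Your approach instead dominates $w$ by the whole-space Neumann solution $\tilde w$, whose trace is the Riesz potential $c_{N,\alpha}I_{2\alpha}f$, and then quotes Hardy--Littlewood--Sobolev. The comparison you set up in Step~2 is the correct maximum-principle argument, and your remark about truncating in $y$ to justify $\psi^-\in H^1_{0,L}(\mathcal C_D)$ is the right way to handle the only nontrivial point; since $f\in C^\beta(D)$ is bounded with compact support, the Poisson representation of $\tilde w$ indeed gives enough decay as $y\to\infty$.

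What each buys: the paper's argument is self-contained (no appeal to HLS or to the Caffarelli--Silvestre Poisson formula) and works directly with the variational structure on $\mathcal C_D$, which meshes well with the rest of Appendix~A. Your argument is shorter and conceptually transparent---it identifies the estimate as an instance of HLS---at the cost of importing two outside facts and having to run the comparison on an unbounded cylinder.
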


\begin{proof}

First, it is easy to see that $w>0$.

We claim that if $q>\frac12$,  then

\begin{equation}\label{neq:2.6}
\Bigl(\int_{D }|
w^{q}(x,0)|^{2^*_\alpha}\,dx\Bigr)^{2/2^*_\alpha}\leq C\int_{ D}
f(x) w^{2q-1}(x,0)\,dx.
\end{equation}

Note that $w\in L^\infty( \mathcal C_D )$.

 We first assume $q\geq 1$.
Let $\varphi=w^{2q-1}\in H^1_{0,L}(\mathcal{C}_D)$. Testing
\eqref{eq:2.15} by $\varphi$, we obtain
$$
\int_{ \mathcal C_D }  y^{1-2\alpha}\nabla w\nabla
\varphi\,dxdy=\int_{D} f(x) w^{2q-1}(x,0)\,dx.
$$

 We deduce
\begin{eqnarray*}
&&\int_{ \mathcal C_D } y^{1-2\alpha} \nabla w\nabla \varphi\,dxdy\\
&= & \frac{2q-1}{q^2}\int_{ \mathcal C_D } y^{1-2\alpha}|\nabla  w^q|^2\,dxdy\\
&\ge & c_0(q)\Bigl(\int_{D }|
w^q(x,0)|^{2^*_\alpha}\,dx\Bigr)^{2/2^*_\alpha}
\end{eqnarray*}
where $ c_0(q)>0 $  is   some constant. Hence,
\begin{equation}\label{eq:2.6}
\Bigl(\int_{D }|
w^q(x,0)|^{2^*_\alpha}\,dx\Bigr)^{2/2^*_\alpha}\leq C\int_{ D} f(x)
w^{2q-1}(x,0)\,dx.
\end{equation}

Now we consider the case $q\in (\frac12, 1)$. For any $\theta>0$,
let $\eta = w (w+\theta)^{2(q-1)}\in H^1_{0,L}(\mathcal{C}_D)$. Then

\[
\nabla \eta = (w+\theta)^{2(q-1)} \nabla w+2(q-1) w
(w+\theta)^{2q-3}\nabla w
\]
From  $q\in (\frac12,1)$, we find

\[
\begin{split}
&\int_{ \mathcal C_D } y^{1-2\alpha}\nabla w \nabla \eta \\
\ge & (2q-1) \int_{ \mathcal C_D } y^{1-2\alpha}(w+\theta)^{2(q-1)}| \nabla w|^2\\
=& \frac{2q-1}{q^2} \int_{ \mathcal C_D } y^{1-2\alpha}|
\nabla ( ( w+\theta)^q-
\theta^q)|^2\\
\ge &  c_0(q)\Bigl(\int_{D }| ( w(x,0)+\theta)^q-
\theta^q|^{2^*_\alpha}\,dx\Bigr)^{2/2^*_\alpha}.
\end{split}
\]
So, we obtain

\begin{equation}\label{1-6-8}
\Bigl(\int_{D }| ( w(x,0)+\theta)^q-
\theta^q|^{2^*_\alpha}\,dx\Bigr)^{2/2^*_\alpha}\le C \int_{D}
f(x)w(x,0)( w(x,0)+\theta)^{2q-2}\,dx.
\end{equation}

Letting $\theta\to 0$ in \eqref{1-6-8}, we obtain \eqref{neq:2.6}.

On the other hand,

\begin{equation}\label{eq:2.7}
\int_{ D } f(x) w^{2q-1}(x,0)\,dx\le \Bigl(\int_{D }
|f|^{\frac{2^*_\alpha q} {2^*_\alpha q-2q+1}}\Bigr)^{\frac
{2^*_\alpha q-2q+1}{2^*_\alpha q}} \|w\|_{q 2^*_\alpha}^{2q-1},
\end{equation}
By \eqref{eq:2.6}, \eqref{eq:2.7} and the embedding
$H^1_{0,L}(\mathcal{C}_D)\hookrightarrow L^{2_\alpha^*}(\Omega)$,
which, together with \eqref{neq:2.6}, gives

\begin{equation}\label{2-6-8}
\|w\|_{q 2^*_\alpha}\le C \Bigl(\int_{D } |f|^{\frac{2^*_\alpha q}
{2^*_\alpha q-2q+1}}\,dx\Bigr)^{\frac {2^*_\alpha q-2q+1}{2^*_\alpha
q}}.
\end{equation}

Let  $ p= \frac{2^*_\alpha q} {2^*_\alpha q-2q+1}$.  Then  $  q=
\frac{p}{2^*_\alpha  -(2^*_\alpha-2)p}>\frac12$, and

\[
2^*_\alpha q=  \frac{2^*_\alpha p}{2^*_\alpha
-(2^*_\alpha-2)p}=\frac{N p}{N-2 p \alpha}.
\]
The proof is complete.
\end{proof}

Let $w\in H^1_{0,L}(\mathcal{C}_D)$ be a solution of
\begin{equation}\label{eq:2.4}
\left\{ \arraycolsep=1.5pt
\begin{array}{lll}
div(y^{1-2\alpha}\nabla w)=0 \;\;   &{\rm in}\; \mathcal C_D , \\[2mm]
 w=0  & {\rm on}\quad \partial_L  \mathcal C_D  , \\[2mm]
 y^{1-2\alpha}\frac{\partial w}{\partial \nu}=a(x)v  &  x\in D,\; y=0.\\[2mm]
 \end{array}
 \right.
 \end{equation}

\begin{Corollary}\label{lem:2.1}

 Suppose $a,v\in C^\beta(D), 0<\beta<1,$ are nonnegative functions. Then, for any $p>\frac N {N-2\alpha}$, there is a constant
$C=C(p)>0$, such that
\begin{equation}\label{eq:2.5}
\|w(\cdot,0)\|_{L^{p}(D)}\leq C\|a\|_{L^{\frac
N{2\alpha}}(D)}\|v\|_{L^{p}(D)}.
\end{equation}
\end{Corollary}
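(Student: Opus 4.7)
The plan is to reduce Corollary~\ref{lem:2.1} directly to Proposition~\ref{lem:2.2} by viewing $f(x) = a(x)v(x)$ as the Neumann-type datum in \eqref{eq:2.15} and then applying H\"older's inequality to split this product appropriately. The only thing to track carefully is the choice of the intermediate exponent.

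Concretely, given $p > \frac{N}{N-2\alpha}$, I would set
\[
q = \frac{pN}{N + 2\alpha p},
\]
so that $\frac{1}{q} = \frac{1}{p} + \frac{2\alpha}{N}$ and $\frac{Nq}{N - 2\alpha q} = p$. A short check shows that the condition $p > \frac{N}{N-2\alpha}$ is equivalent to $q > 1$, and $q < \frac{N}{2\alpha}$ holds automatically for every $p > 0$, so $q$ lies in the admissible range of Proposition~\ref{lem:2.2}. Then by H\"older's inequality with exponents $\frac{N}{2\alpha}$ and $p$ (whose reciprocals sum to $1/q$),
\[
\|a v\|_{L^q(D)} \le \|a\|_{L^{N/(2\alpha)}(D)} \, \|v\|_{L^p(D)}.
\]

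Finally, applying Proposition~\ref{lem:2.2} to the solution $w$ of \eqref{eq:2.4}, whose boundary flux equals $f = av$, gives
\[
\|w(\cdot,0)\|_{L^p(D)} = \|w(\cdot,0)\|_{L^{Nq/(N-2\alpha q)}(D)} \le C \|av\|_{L^q(D)} \le C \|a\|_{L^{N/(2\alpha)}(D)} \|v\|_{L^p(D)},
\]
which is exactly \eqref{eq:2.5}. Since Proposition~\ref{lem:2.2} is established and H\"older's inequality is standard, there is no real obstacle; the only subtlety is verifying that the exponent $q$ determined by the target $p$ actually lies in $(1, N/(2\alpha))$, which is precisely what the hypothesis $p > N/(N-2\alpha)$ is designed to guarantee.
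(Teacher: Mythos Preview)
Your proof is correct and is essentially identical to the paper's own argument: both set $f=av$, invoke Proposition~\ref{lem:2.2} with the exponent $q$ determined by $p=\frac{Nq}{N-2\alpha q}$, and then split $\|av\|_{L^q}$ via H\"older with exponents $\frac{N}{2\alpha}$ and $p$. You have additionally spelled out the verification that $q\in(1,\tfrac{N}{2\alpha})$ precisely when $p>\tfrac{N}{N-2\alpha}$, which the paper leaves implicit.
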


\begin{proof}

Let  $f(x)=  a v$.  For any $q>1$, it follows from
Proposition~\ref{lem:2.2} that

\begin{equation}\label{3-6-8}
\|w(\cdot,0)\|_{L^{\frac{Nq}{N-2\alpha q}}(D)}\le C \| a v\|_{L^q(D)}\le C
\|a\|_{L^{\frac N{2\alpha}}(D)}\|v\|_{ L^{\frac{Nq}{N-2\alpha
q}}(D)}
\end{equation}
We thus prove this corollary by letting  $p=\frac{Nq}{N-2\alpha q}$.

\end{proof}

\begin{Corollary}\label{lem:2.3}
Let $w\in H^1_{0,L}(\mathcal{C}_D)$ be a solution of \eqref{eq:2.4}
with $a, v\geq 0$ and $a, v\in C^\beta(D)$. Then for any $\frac N
{N-2\alpha}<p_2<\frac {2N}{N-2\alpha}$, there is a constant
$C=C(p_2)>0$ such that

\begin{equation}\label{eq:2.18}
\|w(\cdot,0)\|_{L^{p_2}(D)}\leq
C\|a\|_{L^r(D)}\|v\|_{L^{2^*_\alpha}(D)},
\end{equation}
where $\frac 1{p_2}=\frac 1r+\frac 1{2^*_\alpha}-\frac {2\alpha}N$.
\end{Corollary}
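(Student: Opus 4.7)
The statement is a direct analog of Corollary~\ref{lem:2.1}: there, one Hölder-absorbed the factor $a$ into $L^{N/(2\alpha)}$ and the factor $v$ into $L^p$; here, I want to be more flexible and put $v$ into the fixed space $L^{2^*_\alpha}(D)$ (which is the natural ``energy'' space for the trace) while letting $a$ live in a lower $L^r$ with $r<N/(2\alpha)$, paying the price by weakening the target $L^{p_2}$ norm. So the plan is simply to feed $f = av$ into the basic trace estimate from Proposition~\ref{lem:2.2} and split $\|av\|_{L^q(D)}$ by Hölder against $L^r\times L^{2^*_\alpha}$.

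\textbf{Step 1 (choice of intermediate exponent).} Given $p_2\in(\tfrac{N}{N-2\alpha},\tfrac{2N}{N-2\alpha})$, define $q$ by $\tfrac{1}{q}=\tfrac{1}{p_2}+\tfrac{2\alpha}{N}$ so that $p_2=\tfrac{Nq}{N-2\alpha q}$; the assumption $p_2>\tfrac{N}{N-2\alpha}$ gives $q>1$, while $p_2<\tfrac{2N}{N-2\alpha}$ keeps $q<\tfrac{N}{2\alpha}$, which is the admissible range in Proposition~\ref{lem:2.2}.

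\textbf{Step 2 (Hölder).} The defining relation $\tfrac{1}{p_2}=\tfrac{1}{r}+\tfrac{1}{2^*_\alpha}-\tfrac{2\alpha}{N}$ is exactly $\tfrac{1}{q}=\tfrac{1}{r}+\tfrac{1}{2^*_\alpha}$, so by Hölder's inequality
\[
\|av\|_{L^q(D)}\le \|a\|_{L^r(D)}\|v\|_{L^{2^*_\alpha}(D)}.
\]

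\textbf{Step 3 (apply the basic estimate).} Since $a,v\in C^\beta(D)$ and $a,v\ge 0$, the function $f:=av$ is a nonnegative Hölder continuous source, so Proposition~\ref{lem:2.2} applies and gives
\[
\|w(\cdot,0)\|_{L^{p_2}(D)}=\|w(\cdot,0)\|_{L^{Nq/(N-2\alpha q)}(D)}\le C\|av\|_{L^q(D)},
\]
with $C=C(p_2)$ (equivalently $C(q)$). Combining this with the Hölder bound in Step~2 gives precisely the claimed inequality.

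\textbf{Obstacles.} There are essentially none beyond bookkeeping: the only thing to check is that the exponent arithmetic places $q$ strictly inside $(1,N/(2\alpha))$, which is exactly what the hypotheses $\tfrac{N}{N-2\alpha}<p_2<\tfrac{2N}{N-2\alpha}$ ensure, and that $r\ge 1$, which follows from the same range via $\tfrac{1}{r}=\tfrac{1}{q}-\tfrac{1}{2^*_\alpha}<1$. So the corollary reduces cleanly to Proposition~\ref{lem:2.2} plus a single application of Hölder.
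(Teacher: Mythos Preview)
Your proof is correct and follows exactly the same route as the paper: apply Proposition~\ref{lem:2.2} to $f=av$ with the exponent $q=\frac{p_2 N}{N+2\alpha p_2}$, then split $\|av\|_{L^q}$ by H\"older into $\|a\|_{L^r}\|v\|_{L^{2^*_\alpha}}$ with $\frac1r=\frac1q-\frac1{2^*_\alpha}$. The paper's write-up is terser but your added checks that $q\in(1,N/(2\alpha))$ and $r>1$ are welcome.
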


\begin{proof}

Similar to the proof of   Corollary~\ref{lem:2.1},  we have

\begin{equation}\label{10-6-8}
\|w(\cdot,0)\|_{L^{p_2}(D)}\leq C\|a v\|_{L^{\frac{p_2N}{N+2\alpha
p_2}}(D)}\le C \|a\|_{L^r(D)}\|v\|_{L^{2^*_\alpha}(D)},
\end{equation}
where $r$ is determined by

\[
\frac1 r= \frac{N+2\alpha p_2}{p_2N}-\frac1{2^*_\alpha}=\frac1{p_2}
+\frac{2\alpha}N -\frac1{2^*_\alpha}.
\]

\end{proof}

\section{Decay estimate }

Consider the following problem:

\begin{equation}\label{1-21-8}
\left\{ \arraycolsep=1.5pt
\begin{array}{lll}
div(y^{1-2\alpha}\nabla v) = 0, & {\rm in}\ \mathbb{R}^{N+1}_+,\\[2mm]
y^{1-2\alpha}\frac {\partial v}{\partial y} = -
|v(x,0)|^{2^*_\alpha-2}v(x,0),   & {\rm in}\ \ \mathbb{R}^N,
 \end{array}
 \right.
 \end{equation}

In this section,  we will obtain a decay estimate for solutions of
\eqref{1-21-8}.

\begin{Proposition}\label{p1-30} Suppose $v\in H^1_{0,L}(\mathbb{R}^{N+1}
)$ is a solution of \eqref{1-21-8},
then there exists $C >0$
such that
\begin{equation}\label{eq:2.3a}
|v(X)|\leq \frac C{(1+ |X|^2 )^{\frac{N-2\alpha}2}}
\end{equation}
for $ X\in \mathbb{R}^{N+1}_+$.
\end{Proposition}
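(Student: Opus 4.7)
The plan is to combine a Moser-type iteration with a Kelvin transform, exploiting the conformal invariance of the critical extension problem \eqref{1-21-8} to convert decay at infinity into a local $L^\infty$ bound near a transformed origin.

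First I would upgrade the energy solution $v$ to a locally bounded function. Rewrite the boundary condition as the linear equation $-y^{1-2\alpha}\partial_y v = a(x)v$ on $\{y=0\}$ with $a(x) = |v(x,0)|^{2^*_\alpha-2}$. Since $v\in H^1_{0,L}(\mathbb{R}^{N+1}_+)$, the trace inequality gives $v(\cdot,0)\in L^{2^*_\alpha}(\mathbb{R}^N)$ and hence $a\in L^{N/(2\alpha)}(\mathbb{R}^N)$. For every point $z=(x_0,0)$, absolute continuity of the integral provides a scale $r_z>0$ on which $\int_{B_{r_z}(x_0)}|a|^{N/(2\alpha)}\,dx \leq \delta$, the threshold in Lemma~\ref{lem:4.3}. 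After the critical rescaling $v_{r_z}(X)=r_z^{(N-2\alpha)/2}v(r_z X+(x_0,0))$, which preserves the equation \eqref{1-21-8}, Lemma~\ref{lem:4.3} applied to $|v_{r_z}|$ (or to $v_+$ and $v_-$ separately) yields an $L^\infty$ bound of $v$ in a neighborhood of $z$. A covering argument gives $v\in L^\infty(\mathcal{B}^+_R(0))$ for every fixed $R>0$.

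Next I would apply the Kelvin-type transform $\tilde v(X):=|X|^{-(N-2\alpha)}v(X/|X|^2)$ on $\mathbb{R}^{N+1}_+\setminus\{0\}$. The inversion $X\mapsto X/|X|^2$ preserves the upper half-space and scales the last coordinate by $|X|^{-2}$, so that, combined with the prefactor $|X|^{-(N-2\alpha)}$, the degenerate operator $\operatorname{div}(y^{1-2\alpha}\nabla\cdot)$ is conformally invariant. Together with the fact that $2^*_\alpha=2N/(N-2\alpha)$ is exactly the conformal exponent for the boundary nonlinearity, this shows that $\tilde v$ is a solution of \eqref{1-21-8} on $\mathbb{R}^{N+1}_+\setminus\{0\}$. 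The transform preserves both the weighted Dirichlet integral and the $L^{2^*_\alpha}$ norm of the trace, so $\tilde v$ has finite energy near the origin and $\tilde v(\cdot,0)\in L^{2^*_\alpha}$. A removable-singularity argument based on the fact that the isolated point $\{0\}$ has zero capacity with respect to the weight $y^{1-2\alpha}$ then shows that $\tilde v$ extends to a weak solution on all of $\mathbb{R}^{N+1}_+$.

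Applying Step~1 to $\tilde v$ on a neighborhood of the origin gives $\|\tilde v\|_{L^\infty(\mathcal{B}^+_1(0))}\leq C$. Unwinding the transform yields
\[
|v(Y)| = |Y|^{-(N-2\alpha)}\bigl|\tilde v(Y/|Y|^2)\bigr| \leq C|Y|^{-(N-2\alpha)}\quad\text{for } |Y|\geq 1,
\]
which, combined with the interior $L^\infty$ bound from Step~1 on $|Y|\leq 1$, produces the stated estimate. The main obstacle will be the careful verification of the conformal invariance of the weighted operator \emph{together with} the boundary nonlinearity under the Kelvin inversion, and the removable-singularity property of $\tilde v$ at the origin; these are standard but delicate, and once in place the proof reduces to two applications of Lemma~\ref{lem:4.3} combined with a rescaling and covering argument.
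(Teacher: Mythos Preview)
Your approach is essentially the paper's: Kelvin-transform $v$ to $\tilde v$, check that $\tilde v$ is again a finite-energy solution of \eqref{1-21-8} (with the origin a removable singularity), and read off the decay from a local $L^\infty$ bound on $\tilde v$ near $0$. The only differences are in implementation: the paper controls $\int y^{1-2\alpha}|\nabla\tilde v|^2$ via a weighted Hardy inequality (Lemma~\ref{l1-30}) rather than asserting direct conformal invariance of the energy, and it quotes a Harnack inequality in place of your Lemma~\ref{lem:4.3}/Moser argument for the local bound.
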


Before we prove Proposition~\ref{p1-30}, we need the following
lemma.

\begin{Lemma}\label{l1-30}

For any $u \in C_0^\infty (\mathbb{R}^{N+1})$, it holds

\[
\int_{\mathbb{R}^{N+1}}|y|^{1-2\alpha}\frac{u^2}{|X|^2}\,dxdy\leq
C\int_{\mathbb{R}^{N+1}}|y|^{1-2\alpha}|\nabla u|^2\,dxdy.
\]

\end{Lemma}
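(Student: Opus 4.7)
The plan is to prove this weighted Hardy inequality by a Rellich--Pohozaev-type identity generated by the radial vector field $F(X)=X/|X|^2$ on $\mathbb{R}^{N+1}$. Away from the singular set $\{X=0\}\cup\{y=0\}$, a direct computation gives $\mathrm{div}\,F=(N-1)/|X|^2$ and $\nabla(|y|^{1-2\alpha})\cdot F=(1-2\alpha)|y|^{1-2\alpha}/|X|^2$, hence the key pointwise identity
\[
\mathrm{div}\bigl(|y|^{1-2\alpha}F\bigr)=(N-2\alpha)\,\frac{|y|^{1-2\alpha}}{|X|^2}.
\]

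I would then multiply by $u^{2}$ and integrate by parts against the $C_0^\infty$ test function $u$. Because of the combined singularities of the weight and the vector field at the origin and on the hyperplane $\{y=0\}$, the divergence theorem is first applied on the open set $\{|X|>\varepsilon\}\cap\{|y|>\delta\}$. The boundary contribution on $\partial B_\varepsilon(0)$ behaves like $\frac{1}{\varepsilon}\int_{\partial B_\varepsilon}|y|^{1-2\alpha}u^{2}\,dS=O(\varepsilon^{N-2\alpha})$ and vanishes as $\varepsilon\to 0$ thanks to the dimensional condition $N>2\alpha$ (implied by the paper's standing hypothesis $N>6\alpha$); the contributions on $\{|y|=\delta\}$ are $O(\delta^{2-2\alpha})$ up to at most a logarithmic factor in low dimensions, and likewise tend to zero using the boundedness of $u$ and its compact support. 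Passing to the limit $\varepsilon,\delta\to 0$ yields the integral identity
\[
(N-2\alpha)\int_{\mathbb{R}^{N+1}}|y|^{1-2\alpha}\,\frac{u^{2}}{|X|^{2}}\,dxdy=-2\int_{\mathbb{R}^{N+1}}|y|^{1-2\alpha}\,u\,\nabla u\cdot\frac{X}{|X|^{2}}\,dxdy.
\]

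Finally, Cauchy--Schwarz applied to the right-hand side, splitting the integrand into the factors $|y|^{(1-2\alpha)/2}u/|X|$ and $|y|^{(1-2\alpha)/2}\nabla u\cdot X/|X|$, gives $(N-2\alpha)A\le 2\sqrt{A}\sqrt{B}$, where $A$ and $B$ denote respectively the left-hand and right-hand sides of the inequality to be proved; this yields $A\le 4B/(N-2\alpha)^{2}$, which is the lemma with the explicit constant $C=4/(N-2\alpha)^{2}$. The main technical obstacle is the bookkeeping for the two-parameter cut-off near $X=0$ and $\{y=0\}$ that legitimizes the integration by parts; once those boundary terms are shown to vanish in the limit, the rest of the argument is elementary, and one could alternatively invoke the fact (mentioned earlier in the paper) that $|y|^{1-2\alpha}\in A_{2}$ to derive the same inequality from general Muckenhoupt-weighted Hardy theory.
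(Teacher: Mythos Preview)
Your proof is correct and follows essentially the same strategy as the paper: both introduce the vector field $|y|^{1-2\alpha}X/|X|^{2}$, compute its divergence to be $(N-2\alpha)|y|^{1-2\alpha}/|X|^{2}$, integrate by parts against $u^{2}$, and finish with Cauchy--Schwarz to obtain the sharp constant $4/(N-2\alpha)^{2}$. The paper's proof is terser and does not spell out the boundary-term analysis near $X=0$ and $\{y=0\}$ that you rightly flag; your version simply makes that step explicit.
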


\begin{proof}
This lemma may be known.  Since the proof is short, we give the proof
here.

 Let

\[
   V(X)= \frac{|y|^{1-2\alpha }}{(N-2\alpha) |X|^2}X.
   \]
Then

\[
div V = \frac{|y|^{1-2\alpha}}{|X|^2}.
   \]
Thus

\[
\begin{split}
& \int_{\mathbb{R}^{N+1}} \frac{|y|^{1-2\alpha} u^2 }{|X|^2}  =
\int_{\mathbb{R}^{N+1}} u^2 div V\\
 = & -\int_{\mathbb{R}^{N+1}} 2 u
\nabla u \cdot V = -2 \int_{\mathbb{R}^{N+1}} u \nabla u\cdot
\frac{|y|^{1-2\alpha} X}{(N-2\alpha)|X|^2}
\\
\le & \frac2{N-2\alpha}\Bigl(\int_{\mathbb{R}^{N+1}} |y|^{1-2\alpha}
|\nabla u|^2\Bigr)^{\frac12}
 \Bigl(\int_{\mathbb{R}^{N+1}} |y|^{1-2\alpha} \frac{u^2}{
 |X|^2}\Bigr)^{\frac12}.
 \end{split}
\]

 \end{proof}

\begin{proof}[Proof of Proposition~\ref{p1-30}]
To prove \eqref{eq:2.3a}, we use the following Kelvin transformation
\[
\tilde v(X) = |X|^{-N+2\alpha}v\bigg(\frac
X{|X|^2}\bigg)
\]
of $v$. If $v$ is a solution of \eqref{1-21-8}, then $\tilde v$
satisfies

\begin{equation}\label{1-30-8}
\left\{ \arraycolsep=1.5pt
\begin{array}{lll}
div(y^{1-2\alpha}\nabla \tilde v) = 0, & {\rm in}\ \mathbb{R}^{N+1}_+\setminus\{0\},\\[2mm]
y^{1-2\alpha}\frac {\partial \tilde v}{\partial y} = - |\tilde
v(x,0)|^{2^*_\alpha-2}\tilde v(x,0),   & {\rm in}\ \
\mathbb{R}^N\setminus\{0\},
 \end{array}
 \right.
 \end{equation}
 Moreover,  we have
\begin{equation}\label{10-30-8}
 \int_{\mathbb{R}^{N}}|\tilde v(x,0)|^{2^*_\alpha}\,dx\leq C.
\end{equation}

On the other hand, it follows from Lemma~\ref{l1-30} that

\begin{equation}\label{eq:2.3b}
\int_{ \mathbb{R}^{N+1}_+}y^{1-2\alpha}|\nabla \tilde v|^2\,dxdy\leq
C.
\end{equation}

From \eqref{10-30-8} and \eqref{eq:2.3b}, it is standard to prove that $\tilde v$ is a
solution of \eqref{1-21-8}.  Harnack inequality gives

\[
|\tilde v|\le C, \quad \text{in}\; \mathcal B_{1}(0)\cap
\mathbb{R}^{N+1}.
\]
Hence, \eqref{eq:2.3a} follows.

\end{proof}

\vspace{2mm}

\noindent{\bf Acknowledgment}\quad S.Yan is partially supported by
ARC, J. Yang is supported by NNSF of China, No:10961016; GAN PO 555 program of
Jiangxi and X.Yu is supported by NNSF of China, No:11101291.

\end{document}